\makeatletter \@addtoreset{equation}{section} \makeatother
\renewcommand\thetable{\thesection.\@arabic\c@table}
\newtheorem{theorem}{Theorem}[section]
\newtheorem{lemma}[theorem]{Lemma}
\newtheorem{proposition}[theorem]{Proposition}
\newtheorem{remark}[theorem]{Remark}
\newcommand{\R}{\mathbb{R}}
\newcommand{\N}{\mathbb{N}}
\newcommand{\K}{\mathbb{K}}
\newcommand{\eps}{\varepsilon}
\newcommand{\lraup}{\relbar\joinrel\rightharpoonup}
\newcommand{\bs}{\boldsymbol}
\begin{document}
	\title{Lagrange multipliers and characteristic functions}
	\author[um]{Davide Azevedo}
	\ead{davidemsa@math.uminho.pt}
	\author[um]{Lisa Santos\corref{cor1}}
	\ead{lisa@math.uminho.pt}
	\cortext[cor1]{Corresponding author}
	\address[um]{Centro de Matemática, Universidade do Minho, Campus de Gualtar, 4710-057 Braga, Portugal}


\begin{abstract}
We consider a stationary variational inequality with gradient constraint and obstacle. We prove that this problem can be described by an equation using a Lagrange multiplier and a characteristic function. The Lagrange multiplier contains information about the contact set of the modulus of the gradient of the solution with the gradient constraint, and the characteristic function is defined in the contact set of the solution with the obstacle. Moreover, given a convergent sequence of data, we prove the stability of the corresponding solutions.
\end{abstract}

\maketitle
\section{Introduction}

Variational inequalities can model different types of free boundary problems. The most well known one is the obstacle problem, which describes the deformation of a membrane over an obstacle attached on the boundary and subjected to a force. This problem has been intensively studied, appearing in many physical or biological models with different types of local or nonlocal operators. We refer to the study of the free boundary, the boundary of the coincidence set between the solution and the obstacle, indicating only the pioneer papers \cite{LewyStampacchia1969, Kinderlehrer1971, CaffarelliRiviere1976}. 

Another class of relevant variational inequalities are those with constraints on the derivatives, being the first papers on this type of problems related to the elastic-plastic torsion of a cilindrical bar \cite{Ting1966,LanchonDuvaut1967} and corresponding to variational inequalities with gradient constraint. Problems with constraints on the derivatives (for instance, on the curl, in the vector case)  model the behaviour of sandpiles, river networks or superconductors (see \cite{Prigozhin1994, Prigozhin1996_1, Prigozhin1996_2, RodriguesSantos2000, MirandaRodriguesSantos2009, MirandaRodriguesSantos2012,MirandaRodriguesSantos2020}). 

The two types of inequalities referred above share similar questions. For the sake of clarity, we define explicitly below the obstacle and the gradient constraint variational inequalities, in their simplest versions.

The obstacle problem consists on finding $u\in H^1_0(\Omega)$ such that
\begin{equation}\label{obsp}
u\in\K_\psi\qquad\int_\Omega\nabla u\cdot\nabla(v-u)\ge\int_\Omega f(v-u),\qquad\forall v\in\K_\psi,
\end{equation}
where $\K_\psi=\{v\in H^1_0(\Omega):v\ge\psi\}$, with $\psi_{|_{\partial\Omega}}\le 0$. Assuming sufficient regularity for $\psi$ and $f$, the existence of solution is immediate. It is possible to prove that \eqref{obsp} is equivalent to the complementary problem
$$\min\{-\Delta u-f, u-\psi\}=0\ \text{ in }\Omega,\quad u=0\ \text{ on }\partial\Omega.$$
This means that, when $u$ is strictly above the obstacle, the equation $-\Delta u=f$ is satisfied and in the coincidence set $\{u=\psi\}$ we only know that $-\Delta u\ge f$.
When $\psi$ is regular enough, we can write the complementary problem as
$$-\Delta u=f+(-\Delta\psi-f)^+\chi_{\{u=\psi\}},$$
where $\chi_A$ denotes the characteristic function of the set $A$. This characterization of the obstacle problem, by an equation involving the characteristic function of the contact set, can be extended to the two obstacles problem and even to the $N$-membranes or multiphase problems (see \cite{AzevedoRodriguesSantos2005,RodriguesSantos2009}). Concerning the $N$-membranes regularity of the free boundary (the boundaries of the coincidence sets of two, three, \ldots, $N$ membranes), we refer the papers \cite{LindgrenRazaniLindgren2009,SavinYu2019}.

The gradient constraint problem is the following: to find $u\in\K_g^\nabla$ such that
\begin{equation}\label{gradp}
	u\in\K_g^\nabla\qquad\int_\Omega\nabla u\cdot\nabla(v-u)\ge\int_\Omega f(v-u),\qquad\forall v\in\K_g^\nabla,
\end{equation}
where $\K_g^\nabla=\{v\in H^1_0(\Omega):|\nabla v|\le g\}$.
Here, the coincidence set is the set $\{|\nabla u|=g\}$ and its boundary is also called the free boundary. But, in this case, very little is known about it. Up to our knowledge, the only known result is mentioned in a paper of Figalli and Shahgholian (\cite{FigalliShahgholian2015}). They state the regularity of the free boundary of the gradient constraint problem when it is equivalent to a double obstacle problem. A sufficient condition for this equivalence is $\Delta g^2\le 0$ (see \cite{Santos2002}) and $f$ constant.

As in the obstacle problem, we can try to rewrite the variational inequality as a complementary problem. It is easy to verify that $-\Delta u=f$ when $|\nabla u|<g$, but we do not know what happens with the sign of $-\Delta u-f$ when $|\nabla u|=g$. In fact, the problem 
$$\max\{-\Delta u-f,|\nabla u|-g\}=0\quad\text{ in }\Omega,\quad u=0\ \text{ on }\partial\Omega$$
may be not equivalent to \eqref{gradp}. The condition $\Delta g^2\le0$ and $f$ constant is again sufficient to guarantee the equivalence between both problems (see \cite{Santos2002}).

However, in the general case, there exists another way of formulating problem \eqref{gradp} using an equation, which consists on finding a pair $(\lambda,u)$ in suitable spaces such that
\begin{equation}
	\label{ml}
	\begin{cases}
		-\nabla\cdot(\lambda\nabla u)=f\ \text{ in }\Omega,\quad u=0\ \text{ on }\partial\Omega,\\
		\lambda\ge 1,\quad |\nabla u|\le g,\quad (\lambda-1)(|\nabla u|-g)=0\ \text{ in }\Omega.
	\end{cases}
\end{equation}
It is easy to verify that the function $u$ solves the variational inequality \eqref{gradp} and so $u$ is unique. About the uniqueness of $\lambda$, the only known results  can be found in \cite{Brezis1972} and \cite{Santos1991}. Brèzis proved the uniqueness when $g\equiv 1$ and $f \equiv \beta$, where $\beta\in\R^+$, in \cite{Brezis1972} and Santos verified this result for the evolutionary case, with $g\equiv1$ and $f=f(t)$, in \cite{Santos1991}.

The function $\lambda$ contains information about the contact set $\{|\nabla u|=g\}$. When $|\nabla u|<g$ then $\lambda=1$ and $-\Delta u=f$, as in the case of the obstacle problem.
For many years, little was known about the existence of such a $\lambda$, except for the special case where $g\equiv 1$ and $f$ is either constant or depending only on $t$ in the evolutionary case (\cite{Brezis1972,Santos1991}). In these situations, $\lambda$ is an $L^\infty$ function. In 2005, in \cite{DePascaleEvansPratelli2005}, the result was extended, in the stationary case, still with $g\equiv1$, but with $f$ a non-constant $L^p$ function, to the degenerate case where $\lambda\ge0$ (corresponding to the transport densities problem), proving that $\lambda\in L^p(\Omega)$. The type of estimates needed to prove the result seem unlike to apply to different types of constraints, such as the Curl constraint, or to different types of operators.

 In 2013,  assuming that $g\in W^{2,\infty}(\Omega)$, $g\ge g_*>0$ and $f\in L^\infty(\Omega)$, the existence of solution of problem \eqref{ml}, with $\lambda\in L^\infty(\Omega)'$, the vector space of $\sigma$-finite measures, was proved in \cite{AzevedoMirandaSantos2013}. This result was improved in 2017 (\cite{AzevedoSantos2017}), establishing the existence of $\lambda\in L^p(\Omega)$, for any $p\in[1,\infty)$, whenever $g\in C^2(\overline \Omega)$ and $\Delta g^2\le 0$, exactly the same sufficient condition for the equivalence between the gradient constraint problem with a two obstacles problem. Note that, however, when $f$ is not constant, this equivalence is not proved. Furthermore, the existence of $\lambda_\delta\in L^\infty(\Omega)'$ was proved, for any positive $\delta$, replacing $1$ by $\delta$ in \eqref{ml}. Besides, it was proved that $\lambda_\delta$ converges weakly-* to $\lambda$ in $L^\infty(Q_T)'$, $\nabla u_\delta$ converges to $\nabla u$ in $L^p(\Omega)$, and the pair $(\lambda,u)$ solves  the transport densities problem, here with $f$ not necessarily constant.
 
 Several papers extended the above results to other types of operators, namely to the curl, the bi-Laplacian, the symmetric gradient or the fractional gradient, in the stationary and evolutionary cases (see \cite{RodriguesSantos2019, RodriguesSantos2019_2, Giuffre2020, AzevedoRodriguesSantos2020,AzevedoRodriguesSantos2023,AzevedoRodriguesSantos2024}).
 
A natural question that we can pose is what happens if we consider a problem with two different constraints, for example an obstacle and a gradient constraint. This is not a rhetorical question, because we can easily imagine a diffusion problem with a constraint on the modulus of a velocity field, also subject to the existence of one or more obstacles with whom the solution has contact. Besides, problems with obstacle and gradient constraints appeared naturally in \cite{AzevedoAzevedoSantos2023} constructing functions belonging to $\K_g^\nabla$ but that are below a given function $u$ of this convex set. We are now in a position to describe the main goal of this paper. We wish to study the possibility of writing this type of problems with an equation involving the characteristic function of the coincidence set with the obstacle and a Lagrange multiplier. To do so, here we work with assumptions that guarantee us that the Lagrange multiplier $\lambda$ is a function.
\vspace{2mm}

Along this paper, we assume that
\begin{equation}\label{omega}
	 \Omega\text{ is a bounded open subset of $\R^d, d\in\N$ with a $C^2$ boundary},
\end{equation}
\begin{equation}\label{feg}
	f \in L^\infty(\Omega),\quad g\in C^2(\overline\Omega)\quad g_*=\min g>0,\quad\Delta g^2\le0,
\end{equation}
\begin{equation}\label{psi}
\psi \in W^{2,\infty}(\Omega),\quad \psi_{|_{\partial\Omega}}\le0,\quad |\nabla\psi|<g.
\end{equation}

We define the convex set
\begin{equation*}
\K_{g,\psi}^\nabla = \big\{v\in H^1_0(\Omega): |\nabla v| \leq g, v\geq \psi, \text{a.e. in } \Omega \big\}
\end{equation*}
and we consider the variational inequality: find $u\in \K_{g,\psi}^\nabla$ such that
\begin{equation}\label{vi}
\int_\Omega \nabla u\cdot \nabla (v-u) \geq \int_\Omega f (v-u), \quad \forall v\in \K_{g,\psi}^\nabla.
\end{equation}

Consider also the problem: find $\lambda \in L^p(\Omega)$, where $p\in(1,\infty)$, and $u \in W^{1,\infty}_0(\Omega)$ such that
\begin{equation}\label{lmcf}
\begin{cases}
-\nabla \cdot \big(\lambda \nabla u \big) - \big(-\Delta \psi -f \big)^+ \chi_{ _{\{u=\psi\} }} =f \ \text{ in } \Omega, \\
|\nabla u| \leq g, \quad \lambda \geq 1, \quad (\lambda-1)(|\nabla u| - g) = 0 \quad \text{in} \ \Omega, \\
u\ge\psi  \quad \text{in} \ \Omega,\\
u=0  \quad \text{on} \ \partial \Omega.
\end{cases}
\end{equation}

The main result of this paper is the following:
\begin{theorem}\label{+}
	Assume \eqref{omega}, \eqref{feg} and \eqref{psi}. 
	Then, for $1\le p<\infty$, there exists
	$$(\lambda, u) \in L^p(\Omega) \times \big(W^{2,p}(\Omega)\cap W^{1,p}_0(\Omega)\big)$$
that solves \eqref{lmcf}. In addition, $u$ is the unique solution of the variational inequality \eqref{vi}.
\end{theorem}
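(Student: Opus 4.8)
The plan is to obtain the pair $(\lambda,u)$ by a double approximation: first regularize the gradient constraint by a bounded penalization (or, following \cite{AzevedoSantos2017}, by the $\delta$-relaxation replacing $1$ by $\delta$ in the Lagrange multiplier equation), and simultaneously keep the obstacle constraint in the convex set, so that for each parameter $\eps>0$ one has a variational inequality with obstacle $\psi$ only, whose solution $u_\eps$ lies in $W^{2,p}$ thanks to \eqref{psi} and the classical obstacle-problem regularity; here the bound $|\nabla\psi|<g$ is what makes the two constraints compatible and keeps $\psi$ an admissible competitor. For this penalized obstacle problem the characteristic-function characterization is already available in the literature (the discussion after \eqref{obsp}), so $u_\eps$ solves an equation of the form $-\nabla\cdot(\lambda_\eps\nabla u_\eps)=f+(-\Delta\psi-f)^+\chi_{\{u_\eps=\psi\}}$ with $\lambda_\eps\ge1$ built from the gradient penalization. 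The first main step is thus to set up this approximating family and record the uniform estimates: $\|u_\eps\|_{W^{2,p}}\le C$ (independent of $\eps$, using $f\in L^\infty$, $\Delta g^2\le0$, and the Santos-type a priori gradient bound $|\nabla u_\eps|\le g$), and $\|\lambda_\eps\|_{L^p}\le C$ (this is exactly where the hypothesis $\Delta g^2\le0$ enters, as in \cite{AzevedoSantos2017}).

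The second step is passage to the limit. From the uniform bounds, extract $u_\eps\to u$ weakly in $W^{2,p}$ and strongly in $W^{1,p}$ (and in $W^{1,\infty}$ weak-$*$), and $\lambda_\eps\rightharpoonup\lambda$ weakly in $L^p$. Strong $W^{1,p}$-convergence of the gradients lets me pass to the limit in the nonlinear product $\lambda_\eps\nabla u_\eps\rightharpoonup\lambda\nabla u$ in the sense of distributions, giving the PDE in \eqref{lmcf} provided I also identify the limit of $\chi_{\{u_\eps=\psi\}}$. The constraints $|\nabla u|\le g$, $\lambda\ge1$, $u\ge\psi$ pass to the limit by weak/strong convergence and convexity. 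The complementarity relation $(\lambda-1)(|\nabla u|-g)=0$ is obtained as in the gradient-constraint literature: show $\int_\Omega(\lambda_\eps-1)(|\nabla u_\eps|-g)\to0$ by using the penalization structure, then use that the integrand of the limit has a sign.

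The third step is the characteristic function, which I expect to be the main obstacle. One must show $\chi_{\{u_\eps=\psi\}}$ converges, in a usable sense, to $\chi_{\{u=\psi\}}$, or at least that the weak-$*$ $L^\infty$ limit $\theta$ of $\chi_{\{u_\eps=\psi\}}$ satisfies $\theta=\chi_{\{u=\psi\}}$ a.e. The standard device is: off the contact set $\{u>\psi\}$ one has $u_\eps>\psi$ eventually on compact subsets (by uniform convergence of $u_\eps\to u$, which holds since $W^{2,p}\hookrightarrow C^0$ for $p$ large, and then for all $p$ by the stability statement), so $\theta=0=\chi_{\{u=\psi\}}$ there; on the contact set, one shows $(-\Delta\psi-f)^+\theta=(-\Delta\psi-f)^+$ a.e. using that $-\Delta u=-\Delta\psi$ a.e. on $\{u=\psi\}$ (a consequence of $u,\psi\in W^{2,p}$ and $u=\psi$ there, since $D^2u=D^2\psi$ a.e. on the coincidence set), which pins down the product $(-\Delta\psi-f)^+\chi_{\{u=\psi\}}$ that actually appears in \eqref{lmcf} even if $\theta$ itself is only determined up to the set where $-\Delta\psi-f\le0$. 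This is enough for the PDE. Finally, to show $u$ solves the variational inequality \eqref{vi} and is unique: test the PDE in \eqref{lmcf} with $v-u$ for $v\in\K_{g,\psi}^\nabla$, use $\lambda\ge1$ together with $(\lambda-1)(|\nabla u|-g)=0$ to dominate $\int\lambda\nabla u\cdot\nabla(v-u)$ from below by $\int\nabla u\cdot\nabla(v-u)$ on the region $|\nabla u|=g$ (here the sign of $|\nabla v|-|\nabla u|\le 0$ there is used via convexity/Cauchy--Schwarz), and use $(-\Delta\psi-f)^+\chi_{\{u=\psi\}}(v-u)\ge0$ since $v\ge\psi=u$ on the contact set; uniqueness of $u$ is then the classical strict-monotonicity argument for \eqref{vi}, and yields uniqueness of the $W^{2,p}$ solution component regardless of $p$ by elliptic regularity.
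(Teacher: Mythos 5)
Your overall skeleton (penalize, derive uniform estimates, pass to the limit, identify the limit of the obstacle term using $\Delta u=\Delta\psi$ on $\{u=\psi\}$ and the openness of $\{|\nabla u|<g\}$, then recover \eqref{vi} by testing with $v-u$) matches the paper, and your third step is essentially the paper's Step 3 via Lemma \ref{rod} and the inclusion $\{u=\psi\}\subset\{|\nabla u|<g\}$ forced by $|\nabla\psi|<g$. However, there are genuine gaps in your first two steps. The two uniform estimates you record for the approximating solutions are not available: for the penalized problem one does \emph{not} have $|\nabla u_\eps|\le g$ (the penalization precisely allows $|\nabla u_\eps|$ to exceed $g$; only $\|\nabla u_\eps\|_{L^{2r}(\Omega)}\le C_r$ from \eqref{firstestimates} and a measure estimate on $A_\eps=\{|\nabla u_\eps|^2-g^2\ge\sqrt\eps\}$ hold, and $|\nabla u|\le g$ is recovered only in the limit), and one does \emph{not} have $\|u_\eps\|_{W^{2,p}(\Omega)}\le C$ uniformly in $\eps$ --- the paper explicitly warns that this norm may explode as $\eps\to0$. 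The $W^{2,p}$ regularity of $u$ is obtained only \emph{after} the limit, by showing that $u$ solves \eqref{gradp} with right-hand side $F=f-(-\Delta\psi-f)^+\chi_{_*}\in L^\infty(\Omega)$ and invoking Williams' Theorem \ref{reg}. Consequently the strong $W^{1,p}$ convergence of $\nabla u_\eps$, which you deduce from these bounds by compactness, is unsupported; in the paper the strong $H^1_0$ convergence is a nontrivial output of the identity $\int_\Omega\lambda|\nabla u|^2=\int_\Omega\bs\Lambda\cdot\nabla u$, proved through the chain \eqref{maior}--\eqref{estxx'} using the monotone structure, and it is exactly what permits the identification $\bs\Lambda=\lambda\nabla u$.

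Second, your construction keeps the obstacle as a hard constraint in the approximating problem and invokes ``the characteristic-function characterization already available in the literature'' to write $-\nabla\cdot(\lambda_\eps\nabla u_\eps)=f+(-\Delta\psi-f)^+\chi_{\{u_\eps=\psi\}}$. That characterization (the discussion after \eqref{obsp}) is for the Laplacian; for the quasilinear, $\eps$-dependent operator $v\mapsto-\nabla\cdot\big(k_\eps(|\nabla v|^2-g^2)\nabla v\big)$ it would require $W^{2,p}$ regularity of the constrained solution and a Lewy--Stampacchia-type identity for this operator, which is essentially as hard as the theorem you are proving and is nowhere justified; moreover the natural right-hand side would involve the quasilinear operator applied to $\psi$, not $-\Delta\psi$, unless you first show $\widehat k_\eps=1$ on the coincidence set. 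The paper sidesteps all of this by penalizing the obstacle as well, through the bounded term $(-\Delta\psi-f)^+\theta_\eps(u_\eps-\psi)$ in \eqref{approx}: the approximating problem is then an unconstrained quasilinear equation \eqref{versao} with right-hand side $F_\eps$ bounded in $L^\infty(\Omega)$ independently of $\eps$, the obstacle inequality $u_\eps\ge\psi-\eps$ is proved by testing with $(\psi-u_\eps-\eps)^+$, and the characteristic function appears only at the end, as the identification of the weak limit $\chi_{_*}$ of $\theta_\eps(u_\eps-\psi)$. To salvage your route you would have to either establish the quasilinear obstacle characterization uniformly in $\eps$ or switch to the double penalization.
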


We observe that we could have worked with the $q$-Laplace operator  but we decided to avoid technicalities that could make the reading less clear.

\vspace{2mm}

In Section 2, we define a family of approximating problems, proving existence of solution and obtaining {\em a priori} estimates for the solution which are independent of $\eps$.

Section 3 is dedicated to proving that problem \eqref{lmcf} has a solution $(\lambda,u)$, where $u$ is the unique solution of the variational inequality  \eqref{vi}. 

In Section 4, we study a continuous dependence result. For each $n\in \N$, let $(\lambda_n,u_n)$ be a certain solution of problem \eqref{lmcf}, with data $(f_n,g_n,\psi_n)$. We will prove the convergence of the sequence $(\lambda_n,u_n)_n$ to $(\lambda,u)$, solution of the same problem, but with data $(f,g,\psi)$, which is the limit, in suitable spaces, of the sequence $(f_n,g_n,\psi_n)_n$.

\section{Approximating problem}

In this section, we define a family of approximating problems that penalize and regularize the variational inequality \eqref{vi}. The solutions of these problems will be used to obtain the existence of solution of the problem \eqref{lmcf}.

Throughout the paper, we fix $r > \max\{2,d/2\}$ and assume $0 < \eps < 1$. Consider the increasing continuous functions $k_\eps, \theta_\eps:\R \rightarrow \R$ defined by
\begin{eqnarray}\label{ktheta}
k_\eps(s)= \begin{cases}
1& \text{if } s\leq 0\\
1+\big(\tfrac{s}{\eps}\big)^r& \text{if } s>0
\end{cases}
\quad \ \text{ and} & \quad
\theta_\eps(s)= \begin{cases}
-1& \text{if } s\leq -\eps\\
\frac{s}{\eps}& \text{if } \eps< s<0\\
0& \text{if } 0 \leq s.
\end{cases}
\end{eqnarray}
Define a family of approximating problem as follows: to find $u_\eps$ belonging to a suitable space, such that
\begin{equation}\label{approx}
\begin{cases}
-\nabla \cdot \big(k_\eps (| \nabla u_\eps|^2 -g^2) \nabla u_\eps \big) + \big(-\Delta \psi -f \big)^+ \theta_\eps (u_\eps - \psi) =f \ \text{ in } \Omega\vspace{2mm}\\
u_\eps = 0 \ \text{ on } \partial \Omega.
\end{cases}
\end{equation}
From now on, we set $\widehat k_\eps = k_\eps (| \nabla u_\eps|^2 -g^2)$. We will denote by $C$ different positive constants independent of $\eps$.

\begin{proposition}
Under the assumptions \eqref{omega}, \eqref{feg} and \eqref{psi}, problem \eqref{approx} has a unique solution $u_\eps \in W^{2,p}(\Omega)$, for any $1\le p<\infty$.
\end{proposition}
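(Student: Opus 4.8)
The plan is to establish existence via a fixed-point argument (Schauder) combined with elliptic regularity, and uniqueness by a monotonicity/energy argument. First I would set up the fixed-point map: given $w\in C^1(\overline\Omega)$ (or in a suitable Hölder space), freeze the coefficient $a_w := k_\eps(|\nabla w|^2 - g^2)$, which by \eqref{ktheta} satisfies $1\le a_w$, and also freeze the zero-order term by writing the penalization $\big(-\Delta\psi-f\big)^+\theta_\eps(u-\psi)$ with $u$ replaced by $w$ — or, better, keep it implicit since $s\mapsto \theta_\eps(s)$ is monotone, bounded, and Lipschitz, so the frozen linear problem
\begin{equation*}
-\nabla\cdot\big(a_w\nabla u\big) = f - \big(-\Delta\psi-f\big)^+\theta_\eps(w-\psi)\ \text{ in }\Omega,\qquad u=0\text{ on }\partial\Omega,
\end{equation*}
has a unique weak solution in $H^1_0(\Omega)$ by Lax–Milgram (coercivity from $a_w\ge 1$). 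The right-hand side is in $L^\infty(\Omega)$ with a bound depending only on $\|f\|_\infty$ and $\|\Delta\psi\|_\infty$ (since $|\theta_\eps|\le 1$), hence independent of $w$.

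Next comes the regularity bootstrap that makes the map compact. Because $g\in C^2(\overline\Omega)$ and $w\in C^1$, the coefficient $a_w$ is continuous (indeed Hölder if $w\in C^{1,\alpha}$), so by the $L^p$ theory for elliptic equations in divergence form with continuous coefficients and $C^2$ boundary (e.g. Agmon–Douglis–Nirenberg / Gilbarg–Trudinger Ch.~9), the solution $u$ lies in $W^{2,p}(\Omega)\cap W^{1,p}_0(\Omega)$ for every $p<\infty$, with $\|u\|_{W^{2,p}}\le C(p)\big(\|f\|_\infty+\|\Delta\psi\|_\infty\big)$, a bound uniform in $w$ on the relevant ball. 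Choosing $p$ large with $r>d/2$ gives, via Morrey/Sobolev embedding $W^{2,p}\hookrightarrow C^{1,\beta}(\overline\Omega)$, that $u\in C^{1,\beta}$ with a uniform bound. Thus the map $w\mapsto u$ sends a closed ball of $C^1(\overline\Omega)$ into a bounded set of $C^{1,\beta}(\overline\Omega)$, which is precompact in $C^1(\overline\Omega)$; continuity of the map follows from stability of the linear problem under $C^1$-convergence of $w$ (the coefficients $a_w$ converge uniformly, and $\theta_\eps(w-\psi)$ converges in $L^\infty$). Schauder's fixed-point theorem then yields a solution $u_\eps\in W^{2,p}(\Omega)$ of \eqref{approx} for all $p<\infty$.

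For uniqueness, suppose $u_1,u_2$ both solve \eqref{approx}. Testing the difference of the two equations with $u_1-u_2$ and writing $\Phi(\xi)=\int_0^1 k_\eps(|\,t\xi\,|^2\! -g^2)\,\dots$ — more precisely, using that the vector field $\xi\mapsto k_\eps(|\xi|^2-g^2)\xi$ is monotone (this is where $k_\eps$ increasing is used: it is the gradient of a convex function of $\xi$, since $s\mapsto k_\eps(s)$ increasing makes $\xi\mapsto \tfrac12\int_0^{|\xi|^2-g^2(x)} \! k_\eps$ convex), one gets
\begin{equation*}
\int_\Omega \big(k_\eps(|\nabla u_1|^2-g^2)\nabla u_1 - k_\eps(|\nabla u_2|^2-g^2)\nabla u_2\big)\cdot\nabla(u_1-u_2)\ \le\ 0,
\end{equation*}
while monotonicity of the field forces the integrand to be $\ge 0$ pointwise, so $\nabla u_1=\nabla u_2$ a.e.; with the boundary condition, $u_1=u_2$. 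The monotonicity of $\theta_\eps$ only helps (its contribution to the tested difference has a favourable sign), so it does not obstruct the argument.

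The main obstacle is the regularity step: one must verify that the $L^p$ a priori estimate for divergence-form equations genuinely applies with the merely continuous (not smooth) coefficient $a_w$, and track that the resulting constants depend on $a_w$ only through its modulus of continuity and the bounds $1\le a_w\le 1+(\|\nabla w\|_\infty^2/\eps^r)$ — which is why one restricts the fixed-point map to a ball in $C^1$ chosen large enough to be invariant (this invariance must be checked: the uniform $W^{2,p}$, hence $C^{1,\beta}$, bound coming out must not exceed the radius, which works because the right-hand side bound is independent of $w$ while the ellipticity constant is bounded below by $1$). Everything else — Lax–Milgram, Schauder, the monotonicity computation — is routine.
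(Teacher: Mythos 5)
Your uniqueness argument is sound and is essentially the paper's: the field $\xi\mapsto k_\eps(|\xi|^2-g^2)\xi$ is monotone because $s\mapsto k_\eps(s^2-a)s$ is nondecreasing (Lemma \ref{k_monotone} of the paper), and the $\theta_\eps$ term is monotone as well. One small point: mere monotonicity of the field only gives that the integrand vanishes a.e.; to conclude $\nabla u_1=\nabla u_2$ you need the strengthened inequality $\big(k_\eps(|\xi|^2-g^2)\xi-k_\eps(|\eta|^2-g^2)\eta\big)\cdot(\xi-\eta)\ge|\xi-\eta|^2$ (which does hold here since $k_\eps\ge1$ and is increasing, and is exactly the computation behind \eqref{psiu}).

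The existence part has a genuine gap, concentrated in the regularity step of your fixed-point scheme. For a \emph{divergence-form} equation $-\nabla\cdot(a_w\nabla u)=F$ with $a_w$ merely continuous, there is no $W^{2,p}$ theory: already in one dimension, $(au')'=0$ with $a$ continuous but nowhere differentiable gives $u'=c/a\in C^0\setminus W^{1,1}_{\mathrm{loc}}$. The Agmon--Douglis--Nirenberg/Calder\'on--Zygmund results you invoke apply to non-divergence form operators, and to pass to non-divergence form you must differentiate $a_w=k_\eps(|\nabla w|^2-g^2)$, i.e.\ you need $w\in W^{2,p}$ or $C^{1,\alpha}$ control of $w$ feeding into H\"older/Sobolev control of the coefficient --- precisely what you are trying to produce. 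Consequently the map $w\mapsto u$ from a ball of $C^1(\overline\Omega)$ need not even land in $C^1(\overline\Omega)$ (continuous coefficients only give $\nabla u\in L^p$ for all $p$), and the claimed bound ``uniform in $w$ on the relevant ball'' fails because elliptic constants depend on the modulus of continuity of $a_w$, which a $C^1$-ball does not control uniformly. The paper sidesteps the fixed point entirely: the full operator (quasilinear principal part plus the penalization $T$) is monotone, coercive and hemicontinuous, so Browder--Minty (Lions, Théorème 2.1) gives existence \emph{and} uniqueness of the weak solution in one stroke; the $W^{2,p}$ regularity is then obtained for the \emph{quasilinear} equation $-\nabla\cdot(\widehat k_\eps\nabla u_\eps)=F_\eps$ with $F_\eps\in L^\infty(\Omega)$ by the quasilinear elliptic theory developed in \cite{AzevedoMirandaSantos2013} (first a $C^{1,\alpha}$ gradient estimate for the weak solution, after which the coefficient is H\"older and one can bootstrap). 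If you want to keep a fixed-point architecture, you would have to set it up on a ball of $C^{1,\alpha}(\overline\Omega)$ and carefully verify invariance and compactness with constants depending on the H\"older norm of $a_w$; as written, the scheme does not close.
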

\begin{proof} The operator
	$$Lv=-\nabla\cdot\big(k_\eps(|\nabla v|^2-g^2)\nabla v\big)$$ 
	is quasilinear elliptic (for details see \cite{AzevedoMirandaSantos2013}) and the bounded operator $T:H^1_0(\Omega)\rightarrow H^{-1}(\Omega)$ defined by
\begin{equation*}
	\langle Tu,v \rangle = \int_\Omega \big(-\Delta \psi -f \big)^+ \theta_\eps (u - \psi) v, \quad \forall u,v \in H^1_0(\Omega)
	\end{equation*}
is monotone. In fact,
\begin{equation*}
\langle T(u-v),u-v \rangle=\int_\Omega \big(-\Delta \psi -f \big)^+ \big(\theta_\eps(u-\psi)-\theta_\eps(v-\psi)\big)(u-v)\ge0,
\end{equation*}
since $\theta_\eps$ is an increasing function. So, by well known results (see, for instance, \cite[Theorem 2.1]{Lions1969}), the existence and uniqueness of solution in $H^1_0(\Omega)$ follows. 

Setting 
\begin{equation}\label{feps}
	F_\eps=f-\big(-\Delta \psi -f \big)^+ \theta_\eps(u_\eps-\psi), 
\end{equation} 
 as $-1\le\theta_\eps\le0$, then $f\le F_\eps\le f+(-\Delta \psi-f)^+$ and so $F_\eps\in L^\infty(\Omega)$. As $u_\eps$ solves the problem 
\begin{equation}\label{versao}
	-\nabla\cdot(\widehat k_\eps\nabla u_\eps)=F_\eps\text{ in }\Omega,\quad u_\eps=0\text{ on }\partial\Omega,
	\end{equation}
then $u_\eps\in W^{2,p}(\Omega)$, for any $p\in[1,\infty)$ (however, notice that  $\|u_\eps\|_{W^{2,p}(\Omega)}$ may explode when $\eps\rightarrow0$).
\end{proof}

\begin{proposition} Under the assumptions \eqref{omega}, \eqref{feg} and \eqref{psi},
	the solution $u_\eps$ of problem \eqref{approx} satisfies
\begin{equation}\label{upsi}
	u_\eps\ge \psi-\eps.
	\end{equation}
	\end{proposition}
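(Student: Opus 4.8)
The plan is to argue by testing the equation \eqref{versao} (equivalently \eqref{approx}) against the negative part of $u_\eps - \psi + \eps$, i.e.\ against $w := (u_\eps - \psi + \eps)^-= -\min\{u_\eps - \psi + \eps, 0\}$, and to show this test function must vanish. Note first that $w \in H^1_0(\Omega)$: on $\partial\Omega$ we have $u_\eps = 0 \ge \psi$, so $u_\eps - \psi + \eps \ge \eps > 0$ there, whence $w = 0$ on $\partial\Omega$; and $w \ge 0$, with $\nabla w = -\nabla(u_\eps - \psi)\,\chi_{\{u_\eps - \psi + \eps < 0\}}$ a.e.

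Testing \eqref{approx} with $w$ gives
\begin{equation*}
\int_\Omega \widehat k_\eps \nabla u_\eps \cdot \nabla w + \int_\Omega (-\Delta\psi - f)^+ \theta_\eps(u_\eps - \psi)\, w = \int_\Omega f\, w .
\end{equation*}
On the set $\{u_\eps - \psi + \eps < 0\}=\{u_\eps - \psi < -\eps\}$ we have $\theta_\eps(u_\eps - \psi) = -1$ by the definition \eqref{ktheta}, so the middle term equals $-\int_\Omega (-\Delta\psi - f)^+ w$. Moving the right-hand side over, and using $\nabla w = -\nabla(u_\eps - \psi)\chi_{\{\cdots\}}$, the identity becomes
\begin{equation*}
-\int_\Omega \widehat k_\eps \nabla u_\eps \cdot \nabla(u_\eps - \psi)\,\chi_{\{u_\eps-\psi<-\eps\}} = \int_\Omega \big(f + (-\Delta\psi - f)^+\big)\, w \;\ge\; \int_\Omega (-\Delta\psi)\, w,
\end{equation*}
since $f + (-\Delta\psi - f)^+ \ge f + (-\Delta\psi - f) = -\Delta\psi$. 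Now I would rewrite the last integral: because $\psi \in W^{2,\infty}(\Omega)$ and $w \in H^1_0(\Omega)$, integration by parts gives $\int_\Omega (-\Delta\psi)\,w = \int_\Omega \nabla\psi\cdot\nabla w = -\int_\Omega \nabla\psi\cdot\nabla(u_\eps-\psi)\,\chi_{\{u_\eps-\psi<-\eps\}}$. Substituting,
\begin{equation*}
-\int_\Omega \widehat k_\eps\, \nabla u_\eps \cdot \nabla(u_\eps-\psi)\,\chi_{\{\cdots\}} \;\ge\; -\int_\Omega \nabla\psi\cdot\nabla(u_\eps-\psi)\,\chi_{\{\cdots\}},
\end{equation*}
that is, $\int_\Omega \big(\nabla\psi - \widehat k_\eps \nabla u_\eps\big)\cdot\nabla(u_\eps - \psi)\,\chi_{\{\cdots\}} \ge 0$.

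On the set $\{u_\eps - \psi < -\eps\}$ (which in particular is contained in $\{u_\eps < \psi\}$, a set of positive measure only if the estimate fails), the hypothesis $|\nabla\psi| < g$ from \eqref{psi}, combined with $|\nabla u_\eps|$ possibly being large, needs to be exploited together with the definition of $k_\eps$. The key algebraic point I would check is the pointwise inequality
\begin{equation*}
\big(\widehat k_\eps \nabla u_\eps - \nabla\psi\big)\cdot \nabla(u_\eps - \psi) \;\ge\; 0 \quad \text{a.e. on } \{u_\eps - \psi < -\eps\},
\end{equation*}
which would force the (nonpositive, by the previous display) integral of its negative to be zero, hence $\nabla(u_\eps - \psi) = 0$ a.e.\ on $\{u_\eps - \psi < -\eps\}$, so $\nabla w \equiv 0$, so $w \equiv 0$ (as $w \in H^1_0$), i.e.\ $u_\eps \ge \psi - \eps$. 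To see the pointwise inequality, expand: $\widehat k_\eps |\nabla u_\eps|^2 - (1 + \widehat k_\eps)\nabla u_\eps\cdot\nabla\psi + \nabla u_\eps\cdot\nabla\psi - \nabla\psi\cdot\nabla(u_\eps-\psi)$; more cleanly, write it as $\widehat k_\eps \nabla u_\eps\cdot\nabla(u_\eps-\psi) - \nabla\psi\cdot\nabla(u_\eps-\psi)$ and use $\widehat k_\eps \ge 1$ together with Cauchy--Schwarz and $|\nabla\psi| \le g$; the genuinely useful fact is that either $|\nabla u_\eps| \le g$ there, in which case $\widehat k_\eps = 1$ and the expression is $|\nabla(u_\eps-\psi)|^2 \ge 0$, or $|\nabla u_\eps| > g \ge |\nabla\psi|$, in which case $\widehat k_\eps \ge 1$ makes the $\widehat k_\eps|\nabla u_\eps|^2$ term dominate.

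The main obstacle I anticipate is precisely making this last pointwise sign analysis airtight in the region $|\nabla u_\eps| > g$: one must confirm that $\widehat k_\eps |\nabla u_\eps|^2 - (1+\widehat k_\eps)\,\nabla u_\eps\cdot\nabla\psi + |\nabla\psi|^2 \ge 0$ using only $\widehat k_\eps \ge 1$, $|\nabla\psi| \le g \le |\nabla u_\eps|$. Writing $a = |\nabla u_\eps|$, $b = |\nabla\psi|$, $t = \widehat k_\eps \ge 1$, the worst case is $\nabla u_\eps\cdot\nabla\psi = ab$, giving $ta^2 - (1+t)ab + b^2 = (ta - b)(a-b) \ge 0$ since $a \ge b$ and $ta \ge a \ge b$. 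So the inequality holds, and the argument closes. (If one prefers, the same computation shows directly that $w$ satisfies $\int_\Omega \nabla w \cdot \nabla w \le 0$ up to the monotone correction, avoiding any case split — but spelling out the two regimes is the transparent route.)
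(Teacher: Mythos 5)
Your proposal is correct and follows essentially the same route as the paper: test with $(\psi-u_\eps-\eps)^+$ (your $w$), use $\theta_\eps(u_\eps-\psi)=-1$ on $\{u_\eps-\psi<-\eps\}$ and $(-\Delta\psi-f)^+\ge-\Delta\psi-f$ to reduce to the sign of $\int(\widehat k_\eps\nabla u_\eps-\nabla\psi)\cdot\nabla(u_\eps-\psi)$, and then exploit $\widehat k_\eps=1$ where $|\nabla u_\eps|\le g$ versus $\widehat k_\eps>1$ where $|\nabla u_\eps|>g\ge|\nabla\psi|$. The only cosmetic difference is at the end: the paper proves the slightly stronger pointwise bound $(\widehat k_\eps\nabla u_\eps-\nabla\psi)\cdot\nabla(u_\eps-\psi)\ge|\nabla(u_\eps-\psi)|^2$ and concludes $\int|\nabla w|^2\le0$ in one stroke, whereas you deduce that the nonnegative integrand vanishes a.e.\ and then run a short case analysis; both close the argument.
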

\begin{proof}
Note that $(\psi-u_\eps-\eps)^+>0$ if and only if $u_\eps-\psi<-\eps$ and then 
$$\theta_\eps(u_\eps-\psi)(\psi-u_\eps-\eps)^+=-(\psi-u_\eps-\eps)^+.$$
 So, multiplying the first equation of problem \eqref{approx} by $(\psi-u_\eps-\eps)^+$ and integrating over $\Omega$, we get
\begin{align*}
\int_\Omega\widehat k_\eps\nabla u_\eps\cdot\nabla(\psi-u_\eps-\eps)^+&=\int_\Omega f(\psi-u_\eps-\eps)^+ +\int_\Omega(-\Delta\psi-f)^+(\psi-u_\eps-\eps)^+\\
&\ge -\int_\Omega\Delta\psi(\psi-u_\eps-\eps)^+,
	\end{align*}
as $(-\Delta\psi-f)^+\ge -\Delta\psi-f$. Then
\begin{equation*}
	0\le\int_\Omega\widehat k_\eps\nabla u_\eps\cdot\nabla(\psi-u_\eps-\eps)^+-\int_\Omega\nabla\psi\cdot\nabla(\psi-u_\eps-\eps)^+
	=\int_\Omega\big(\widehat k_\eps\nabla u_\eps-\nabla\psi\big)\cdot\nabla(\psi-u_\eps-\eps)^+
\end{equation*}
and so, 
\begin{equation}\label{upsi2}
	\int_{\{\psi-u_\eps-\eps>0\}}\big(\widehat k_\eps\nabla u_\eps-\nabla\psi\big)\cdot\nabla(u_\eps-\psi)\le0.
\end{equation}
Now we prove that 
\begin{equation}\label{psiu}
	\big(\widehat k_\eps\nabla u_\eps-\nabla\psi\big)\cdot\nabla(u_\eps-\psi)\ge|\nabla(u_\eps-\psi)|^2,
	\end{equation}
which is equivalent to
$$(\widehat k_\eps-1)\big(|\nabla u_\eps|^2-\nabla u_\eps\cdot\nabla\psi\big)\ge0.$$
But, using the Cauchy-Schwartz inequality and \eqref{psi}, we have
$$(\widehat k_\eps-1)\big(|\nabla u_\eps|^2-\nabla u_\eps\cdot\nabla\psi\big)\ge (\widehat k_\eps-1)|\nabla u_\eps|(|\nabla u_\eps|-|\nabla\psi|) \ge (\widehat k_\eps-1)|\nabla u_\eps|(|\nabla u_\eps|-g),$$
recalling that $|\nabla\psi|\le g$. The right hand side is non-negative as, when $|\nabla u_\eps|\le g$, we have $\widehat k_\eps=1$ and, when $|\nabla u_\eps|>g$, we have $\widehat k_\eps>1$.
Then, using \eqref{upsi2} and \eqref{psiu}, we get
\begin{equation*}
\int_\Omega|\nabla(\psi-u_\eps-\eps)^+|^2=\int_{\{\psi-u_\eps-\eps>0\}}|\nabla(\psi-u_\eps)|^2\le 0,
\end{equation*}
concluding that $(\psi-u_\eps-\eps)^+\equiv0$, as it is zero on $\partial\Omega$. 
\end{proof}

We will now obtain {\em a priori} estimates independent of $\eps$.

\begin{proposition} Under the assumptions \eqref{omega}, \eqref{feg} and \eqref{psi},
the solution $u_\eps$ of problem \eqref{approx} satisfies the following a priori estimates:
\begin{equation}\label{firstestimates}
\| \widehat k_\eps \|_{L^1(\Omega)} \leq C, \quad \quad \| \nabla u_\eps \|_{L^{2r}(\Omega)} \leq C_r,
\end{equation}
where $C_r$ is a positive constant independent of $\eps$.
\end{proposition}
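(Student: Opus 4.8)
The plan is to establish the two bounds in \eqref{firstestimates} in the stated order: first the $L^1$ bound on $\widehat k_\eps$, which is the substantive one, and then to read off the $L^{2r}$ bound on $\nabla u_\eps$ from it by an elementary pointwise inequality. The engine for the $L^1$ bound is a single energy estimate for \eqref{approx} that is uniform in $\eps$ because $\widehat k_\eps\ge 1$ supplies coercivity with an $\eps$-independent constant, and because the right-hand side $F_\eps$ of \eqref{feps} is uniformly bounded.

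Concretely, I would test the weak form of \eqref{approx} with $v=u_\eps\in H^1_0(\Omega)$, obtaining
\[
\int_\Omega \widehat k_\eps\,|\nabla u_\eps|^2=\int_\Omega F_\eps\,u_\eps .
\]
Since $\psi\in W^{2,\infty}(\Omega)$ and $f\in L^\infty(\Omega)$, one has $\|F_\eps\|_{L^2(\Omega)}\le\|f\|_{L^\infty(\Omega)}+\|(-\Delta\psi-f)^+\|_{L^\infty(\Omega)}\,|\Omega|^{1/2}\le C$, uniformly in $\eps$. Using $\widehat k_\eps\ge 1$ and the Poincaré inequality gives $\|u_\eps\|_{L^2(\Omega)}\le C_P\|\nabla u_\eps\|_{L^2(\Omega)}\le C_P\big(\int_\Omega\widehat k_\eps|\nabla u_\eps|^2\big)^{1/2}$, so the right-hand side can be absorbed into the left, yielding $\int_\Omega\widehat k_\eps|\nabla u_\eps|^2\le C$. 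To pass from this to $\|\widehat k_\eps\|_{L^1(\Omega)}\le C$, I would split $\Omega$ according to whether the gradient constraint is active. On $\{|\nabla u_\eps|\le g\}$ the definition \eqref{ktheta} gives $\widehat k_\eps=1$, so that part contributes at most $|\Omega|$; on $\{|\nabla u_\eps|>g\}$ one has $|\nabla u_\eps|^2>g^2\ge g_*^2>0$ by \eqref{feg}, hence $\widehat k_\eps\le g_*^{-2}\,\widehat k_\eps|\nabla u_\eps|^2$, and integrating gives $\int_{\{|\nabla u_\eps|>g\}}\widehat k_\eps\le g_*^{-2}\int_\Omega\widehat k_\eps|\nabla u_\eps|^2\le C$. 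Adding the two pieces proves the first estimate.

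For the second estimate, \eqref{ktheta} gives the pointwise bound $\widehat k_\eps\ge\big((|\nabla u_\eps|^2-g^2)^+/\eps\big)^r$, whence $(|\nabla u_\eps|^2-g^2)^+\le\eps\,\widehat k_\eps^{1/r}\le\widehat k_\eps^{1/r}$ since $\eps<1$; therefore $|\nabla u_\eps|^2\le\|g\|_{L^\infty(\Omega)}^2+\widehat k_\eps^{1/r}$ and, raising to the power $r$, $|\nabla u_\eps|^{2r}\le C_r(1+\widehat k_\eps)$ a.e.\ in $\Omega$. Integrating and invoking the $L^1$ bound just obtained gives $\|\nabla u_\eps\|_{L^{2r}(\Omega)}\le C_r$. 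I do not expect a genuine obstacle here: the proof is essentially a coercivity-plus-comparison-of-exponents argument, and the only points that need attention are that every constant is independent of $\eps$ (guaranteed by $\widehat k_\eps\ge1$, $g_*>0$, and the uniform bound on $F_\eps$) and that $u_\eps$ is an admissible test function, which holds since $u_\eps\in W^{2,p}(\Omega)\subset H^1_0(\Omega)$; note in particular that the hypothesis $\Delta g^2\le 0$ is not needed for this proposition.
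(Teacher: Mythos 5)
Your proof is correct and follows essentially the same route as the paper: the same energy identity obtained by testing with $u_\eps$, absorbed via Poincar\'e and $\widehat k_\eps\ge1$, the same splitting of $\Omega$ by whether $|\nabla u_\eps|>g$ together with the lower bound $g\ge g_*$ to get the $L^1$ estimate, and the same key pointwise inequality $\big((|\nabla u_\eps|^2-g^2)^+\big)^r\le\eps^r\widehat k_\eps$ for the $L^{2r}$ bound. Your only (harmless) deviation is that you apply that pointwise bound globally instead of splitting off the paper's set $A_\eps=\{|\nabla u_\eps|^2-g^2\ge\sqrt\eps\}$, which slightly streamlines the last step; your remark that $\Delta g^2\le0$ is not used here is also accurate.
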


\begin{proof}
We start by proving the upper bound of $\| \widehat k_\eps \|_{L^1(\Omega)}$. Multiplying \eqref{approx} by $u_\eps$ and integrating, we obtain
$$\int_\Omega \widehat k_\eps | \nabla u_\eps|^2 + \int_\Omega \big(-\Delta \psi -f \big)^+ \theta_\eps (u_\eps - \psi)u_\eps = \int_\Omega f u_\eps.$$

As $-1\leq \theta_\eps \leq 0$, then
\begin{align*}
\int_\Omega \widehat k_\eps | \nabla u_\eps|^2 &\leq \int_\Omega fu_\eps + \int_\Omega\big(-\Delta \psi -f \big)^+|u_\eps| \\
&\leq \Big( \| f \|_{L^2(\Omega)} + \|\big(-\Delta \psi -f \big)^+ \|_{L^2(\Omega)}\Big) \| u_\eps \|_{L^2(\Omega)} \\
&\leq D \Big( \| f \|_{L^2(\Omega)} + \|\big(-\Delta \psi -f \big)^+\|_{L^2(\Omega)}\Big) \|\nabla u_\eps \|_{L^2(\Omega)},
\end{align*}
where $D$ is the Poincar\'e constant.

Since $\widehat k_\eps \geq 1$, we get, 
$$\left(\int_\Omega | \nabla u_\eps|^2\right)^\frac12 \leq D\big( \| f \|_{L^2(\Omega)} + \|\big(-\Delta \psi -f \big)^+\|_{L^2(\Omega)}).$$
Therefore
\begin{equation}\label{c1}
\int_\Omega \widehat k_\eps | \nabla u_\eps|^2 \leq D^2\big( \| f \|_{L^2(\Omega)} + \|\big(-\Delta \psi -f \big)^+\|_{L^2(\Omega)})^2.
\end{equation}
As
\begin{align*}
\int_\Omega \widehat k_\eps \leq \tfrac {1}{g^2_*} \int_\Omega g^2 \widehat k_\eps& = \tfrac {1}{g^2_*} \int_{\{|\nabla u_\eps|\le g\}}g^2  \widehat k_\eps +\tfrac {1}{g^2_*} \int_{\{|\nabla u_\eps|>g\}} g^2\widehat k_\eps\\
&\le \tfrac1{g_*^2}\int_\Omega g^2+\tfrac1{g_*^2}\int_\Omega \widehat k_\eps|\nabla u_\eps|^2,
\end{align*}
we conclude that
\begin{equation*}
\|\widehat k_\eps\|_{L^1(\Omega)} \leq \frac {1}{g^2_*} \big(\|g^2\|_{L^2(\Omega)} + C_1\big),
\end{equation*}
where $C_1$ is the constant of the right hand side of \eqref{c1}.

Now we will obtain the $\| \nabla u_\eps \|_{L^{2r}(\Omega)}$ estimate. We partially follow the calculations in \cite{AzevedoSantos2017}. Setting
\begin{equation}\label{aeps}
	A_{\eps} = \{x\in \R^d:|\nabla u_{\eps}|^2-g^2\ge\sqrt\eps\},
\end{equation}
we have, for $x\in A_\eps$,
\begin{align*}
	|\nabla u_\eps|^{2r} &\leq 2^{r-1} \big( |\nabla u_\eps|^2 - g^2 \big)^r + 2^{r-1} g^{2r} \\ 
	& \leq 2^{r-1} \eps^r\widehat k_\eps + 2^{r-1} g^{2r}.
\end{align*}
So
\begin{align*}
\int_\Omega |\nabla u_\eps|^{2r}&= \int_{\Omega\setminus A_\eps} |\nabla u_\eps|^{2r} + \int_{A_\eps} |\nabla u_\eps|^{2r} \\
& \leq \int_{\Omega\setminus A_\eps} (g^{2}+\sqrt\eps)^r + \int_{A_\eps} 2^{r-1} \big( \eps^r \widehat k_\eps + g^{2r} \big) \\
& \leq C\Big(\|g\|_{L^{2r}(\Omega)}^{2r} +  \|\widehat k_\eps\|_{L^1(\Omega)} +1\Big),
\end{align*}
with $C$ being a constant independent of $\eps$.
\end{proof}

	To obtain an {\em a priori} estimate of $\|\widehat k_\eps\|_{L^p(\Omega)}$, for any $p\in[1,\infty)$, we will adapt to our case the four results we will list below, which come from \cite{AzevedoSantos2017}, where the detailed proofs can be found. We note that it is enough to prove the result for  $p\ge2$. 
	
	The equation under analysis in \cite{AzevedoSantos2017} is $-\nabla(k_\eps(|\nabla u_\eps|^2-g^2)\nabla u_\eps)=f$ in $\Omega$, $u_\eps=0$ on $\partial\Omega$, and the assumptions on $f$ and $g$ are the same as ours. On the other hand, here we are working with the problem $-\nabla(k_\eps(|\nabla u_\eps|^2-g^2)\nabla u_\eps)=F_\eps$ in $\Omega$, $u_\eps=0$ on $\partial\Omega$, where $F_\eps$ is defined in \eqref{feps} and satisfies $\|F_\eps\|_{L^\infty(\Omega)}\le2\|f\|_{L^\infty(\Omega)}+\|\Delta\psi\|_{L^\infty(\Omega)}$, and so it is bounded independently of $\eps$.
	
	\begin{lemma}[see \cite{AzevedoSantos2017}, Lemma 2.3, p. 598]\label{lemma23}
		Assume \eqref{omega}, \eqref{feg} and \eqref{psi}, and let $u_\eps$ be the solution of problem \eqref{approx}. Then there exists a positive constant $C$ and $\eps_0>0$, depending only on $\|f\|_{L^\infty(\Omega)}$, $\|\Delta\psi\|_{L^\infty(\Omega)}$ and $\|g\|_{C^2(\overline\Omega)}$ such that, for any $0<\eps\le\eps_0$,
		\begin{equation*}
			\forall x\in\partial\Omega\quad|\nabla u_\eps(x)|^2\le g^2(x)+C\eps.
		\end{equation*}
	\end{lemma}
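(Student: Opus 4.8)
The plan is to adapt the barrier argument from \cite{AzevedoSantos2017}, Lemma 2.3, to the present setting, where the right-hand side of the equation is $F_\eps$ instead of $f$. The key point is that, as already noted in the text, $F_\eps \in L^\infty(\Omega)$ with $\|F_\eps\|_{L^\infty(\Omega)} \le 2\|f\|_{L^\infty(\Omega)} + \|\Delta\psi\|_{L^\infty(\Omega)}$, a bound independent of $\eps$; so every estimate in the original proof that used $\|f\|_{L^\infty(\Omega)}$ goes through verbatim with this larger, but still $\eps$-independent, constant. Since $u_\eps$ solves $-\nabla\cdot(\widehat k_\eps \nabla u_\eps) = F_\eps$ in $\Omega$ with $u_\eps = 0$ on $\partial\Omega$ (equation \eqref{versao}), and $\widehat k_\eps$ is bounded below by $1$ and satisfies the structure conditions making $L$ uniformly elliptic once we are on the set where $|\nabla u_\eps|$ is controlled, the classical boundary-regularity machinery applies.

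The steps, in order, are as follows. First, fix a boundary point $x_0 \in \partial\Omega$; using the $C^2$ regularity of $\partial\Omega$ (assumption \eqref{omega}) and an interior/exterior ball condition, introduce a local barrier function of the form $w(x) = \psi(x) + \mu\, h(\mathrm{dist}(x,\partial\Omega))$ (or a comparable construction with the signed distance), where $h$ is concave, $h(0)=0$, and $\mu$ is a constant to be chosen depending only on $\|F_\eps\|_{L^\infty(\Omega)}$, $\|g\|_{C^2(\overline\Omega)}$ and the geometry of $\Omega$. Second, verify that $w$ is a supersolution (and a symmetric construction gives a subsolution) of the equation $-\nabla\cdot(\widehat k_\eps\nabla \cdot) = F_\eps$ in a boundary neighbourhood, using that $\psi \ge u_\eps - \eps$ (Proposition, estimate \eqref{upsi}) to handle the obstacle term and the already-established bound $\|\nabla u_\eps\|_{L^{2r}(\Omega)} \le C_r$ together with $\|\widehat k_\eps\|_{L^1(\Omega)} \le C$ (estimates \eqref{firstestimates}) where gradient control is needed. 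Third, apply the comparison principle for the quasilinear operator to sandwich $u_\eps$ between the sub- and supersolutions in the boundary strip; since all three functions agree (up to an $O(\eps)$ error coming from \eqref{upsi}) on $\partial\Omega$, this yields $|u_\eps(x) - u_\eps(x_0)| \le C(\mathrm{dist}(x,x_0) + \eps)$ near $x_0$. Fourth, differentiate this inequality in the tangential and normal directions at $x_0$: the tangential derivatives of $u_\eps$ vanish on $\partial\Omega$ (since $u_\eps \equiv 0$ there), so the barrier controls only the normal derivative, giving $|\partial_\nu u_\eps(x_0)| \le C$ with an explicit $O(\eps)$ correction. Fifth, combine $|\nabla u_\eps(x_0)|^2 = |\partial_\nu u_\eps(x_0)|^2$ with the precise form of the barrier slope $\mu = |\nabla g(x_0)| + O(\text{data})$ to extract the sharp constant, obtaining $|\nabla u_\eps(x_0)|^2 \le g^2(x_0) + C\eps$ for $\eps \le \eps_0$; the threshold $\eps_0$ is where the linearisation of $k_\eps$ near $s = 0$ and the concavity of $h$ can still be balanced.

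The main obstacle, as in the original reference, is the second step: constructing a barrier whose slope at the boundary is exactly $g$ (not merely $O(1)$), so that one recovers $|\nabla u_\eps|^2 \le g^2 + C\eps$ with the \emph{correct} leading term rather than a crude bound. This forces a careful Taylor expansion of $g^2$ along the inner normal, using $g \in C^2(\overline\Omega)$ and $\Delta g^2 \le 0$ from \eqref{feg}, and a matching expansion of $k_\eps(|\nabla w|^2 - g^2)$ for the candidate supersolution $w$; the sign condition $\Delta g^2 \le 0$ is exactly what makes $w$ a genuine supersolution up to the claimed $\eps$-order. The new feature here relative to \cite{AzevedoSantos2017} — the presence of the penalised obstacle term $(-\Delta\psi - f)^+\theta_\eps(u_\eps-\psi)$ — is benign: because $|\nabla\psi| < g$ strictly (assumption \eqref{psi}), near the boundary the obstacle is inactive once $\eps$ is small (by \eqref{upsi}, $u_\eps \ge \psi - \eps$, and on $\partial\Omega$ one has $u_\eps = 0 \ge \psi$), so the term $\theta_\eps(u_\eps - \psi)$ either vanishes or contributes a quantity of order $O(1)$ that is absorbed into $\|F_\eps\|_{L^\infty(\Omega)}$; this is why the statement needs only the $\eps$-independent bound on $F_\eps$ and the strict inequality $|\nabla\psi| < g$.
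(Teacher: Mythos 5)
Your proposal takes essentially the same route as the paper: the paper's own proof is simply the observation that the barrier (sub/supersolution) construction of Lemma 2.3 in \cite{AzevedoSantos2017} goes through verbatim once $\|f\|_{L^\infty(\Omega)}$ is replaced by $2\|f\|_{L^\infty(\Omega)}+\|\Delta\psi\|_{L^\infty(\Omega)}$, which is exactly your central point that the penalisation term is absorbed into the uniformly bounded right-hand side $F_\eps$. The extra details you speculate about (the precise form of the barrier, the role of \eqref{upsi} --- which, incidentally, you misquote as $\psi\ge u_\eps-\eps$ rather than $u_\eps\ge\psi-\eps$ --- and of the estimates \eqref{firstestimates}) are not needed for, and not part of, the paper's argument, but they do not affect the correctness of the reduction.
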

\begin{proof}
  The proof of Lemma 2.3 of \cite{AzevedoSantos2017} is based on the construction of  supersolutions and  subsolutions of the problem, which are zero in each chosen point $x_0\in\partial\Omega$. A careful analysis of that proof shows it can be used here, replacing $\|f\|_{L^\infty(\Omega)}$ by $2\|f\|_{L^\infty(\Omega)}+\|\Delta\psi\|_{L^\infty(\Omega)}$ along the proof.
\end{proof}
\begin{lemma}[\cite{AzevedoSantos2017}, Lemma 2.4, p. 599] \label{lemmaI} Assume \eqref{omega}, \eqref{feg} and \eqref{psi}, and let $u_\eps$ be the solution of problem \eqref{approx}. Set
	\begin{equation*}
		I_\eps=\tfrac{\partial u_\eps}{\partial\bs n}\Delta u_\eps-\tfrac{\partial u_\eps}{\partial x_i}\,\tfrac{\partial^2 u_\eps}{\partial x_i\partial\bs n},
	\end{equation*}
	where $\bs n$ is the outward unitary normal to $\partial\Omega$.
	Then $\|I_\eps\|_{L^\infty(\partial\Omega)}$
	is bounded independently of $\eps$.
\end{lemma}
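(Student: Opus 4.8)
The plan is to adapt, essentially line by line, the boundary-gradient-estimate machinery of \cite{AzevedoSantos2017}, since the only structural change is that the right-hand side $f$ has been replaced by $F_\eps$ from \eqref{feps}, which is still bounded in $L^\infty(\Omega)$ uniformly in $\eps$. First I would recall the pointwise identity on $\partial\Omega$: differentiating the equation \eqref{versao} written as $\widehat k_\eps\Delta u_\eps + k_\eps'(\cdots)\,2(\nabla u_\eps\cdot\nabla^2 u_\eps)\cdot\nabla u_\eps - \nabla g^2\cdot\nabla u_\eps\,k_\eps'(\cdots) = -F_\eps$ tangentially along $\partial\Omega$, so that the boundary term $I_\eps=\tfrac{\partial u_\eps}{\partial\bs n}\Delta u_\eps-\tfrac{\partial u_\eps}{\partial x_i}\,\tfrac{\partial^2 u_\eps}{\partial x_i\partial\bs n}$ appears naturally. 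The key point is that, because $u_\eps=0$ on $\partial\Omega$, the tangential derivatives of $u_\eps$ vanish on $\partial\Omega$, so $\nabla u_\eps=\tfrac{\partial u_\eps}{\partial\bs n}\,\bs n$ on $\partial\Omega$ and the second-order tangential data of $u_\eps$ on $\partial\Omega$ is controlled by the curvature of $\partial\Omega$ (which is bounded, $\partial\Omega$ being $C^2$) times $\tfrac{\partial u_\eps}{\partial\bs n}$, i.e. by $|\nabla u_\eps|$ on $\partial\Omega$.

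Next I would use Lemma~\ref{lemma23}: on $\partial\Omega$ one has $|\nabla u_\eps|^2\le g^2+C\eps\le \|g\|_{C^2(\overline\Omega)}^2+C$, so $|\nabla u_\eps|$ is bounded on $\partial\Omega$ independently of $\eps$. Feeding this into the identity for $I_\eps$, every term is a product of quantities each bounded uniformly in $\eps$: the boundary values of $\nabla u_\eps$, the curvature terms of $\partial\Omega$, the $C^2$ norm of $g$, and—crucially—$F_\eps$ in place of $f$, which contributes $\|F_\eps\|_{L^\infty(\Omega)}\le 2\|f\|_{L^\infty(\Omega)}+\|\Delta\psi\|_{L^\infty(\Omega)}$. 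The factors $\widehat k_\eps$ and $k_\eps'$ that multiply the second derivatives of $u_\eps$ are handled exactly as in \cite{AzevedoSantos2017}: the combination appearing in $I_\eps$ is precisely the one that avoids the uncontrolled factor $\widehat k_\eps$ (this is the reason $I_\eps$ is defined with that particular antisymmetric combination), so one obtains $\|I_\eps\|_{L^\infty(\partial\Omega)}\le C$ with $C$ depending only on $\|f\|_{L^\infty(\Omega)}$, $\|\Delta\psi\|_{L^\infty(\Omega)}$, $\|g\|_{C^2(\overline\Omega)}$ and $\Omega$.

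The main obstacle—and really the only one—is bookkeeping: one must check that the substitution $f\rightsquigarrow F_\eps$ does not break any step of the original argument, in particular that no derivative of the right-hand side is ever needed. Since $F_\eps$ is differentiated nowhere in the computation of $I_\eps$ (only its $L^\infty$ bound enters, through the $\Delta u_\eps$ term after using \eqref{versao}), and since $\|F_\eps\|_{L^\infty(\Omega)}$ is bounded independently of $\eps$ by the estimate already recorded after \eqref{feps}, the adaptation goes through verbatim. Thus I would simply invoke the proof of Lemma~2.4 of \cite{AzevedoSantos2017}, noting that it applies here with $\|f\|_{L^\infty(\Omega)}$ replaced throughout by $2\|f\|_{L^\infty(\Omega)}+\|\Delta\psi\|_{L^\infty(\Omega)}$, exactly as was done for Lemma~\ref{lemma23}.
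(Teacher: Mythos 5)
Your argument is correct and follows essentially the same route as the paper, which simply invokes the identity from \cite{Ladyzhenskaya1969}, p.~20: since $u_\eps$ vanishes on the $C^2$ boundary, $I_\eps$ reduces to a mean-curvature expression times $\big(\tfrac{\partial u_\eps}{\partial\bs n}\big)^2$, and Lemma~\ref{lemma23} then bounds $|\nabla u_\eps|$ on $\partial\Omega$ uniformly in $\eps$. The only inessential detour is your appeal to the equation \eqref{versao} and to $F_\eps$ when estimating $I_\eps$ itself: the identity is purely geometric and needs no equation, the modified right-hand side $F_\eps$ entering only through the boundary gradient bound of Lemma~\ref{lemma23}.
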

\begin{proof}
	The proof uses a result of \cite{Ladyzhenskaya1969}, p. 20.
\end{proof}

\begin{lemma}[see \cite{AzevedoSantos2017}, Lemma 2.5, p. 599]
	Assume \eqref{omega}, \eqref{feg} and \eqref{psi}, and let $u_\eps$ be the solution of problem \eqref{approx}. Then, for any $2 \leq p <\infty$,
	\begin{equation}\label{secondestimate}
		\|\widehat k_\eps \|_{L^p(\Omega)} ^p\le C\int_\Omega\big(|f|^p+|\Delta\psi|^p+1\big)
		+D\left(\int_\Omega \widehat k_\eps^p\Delta g^2-\int_{\partial\Omega} \widehat k_\eps^p\tfrac{\partial g^2}{\partial\bs n}-2\int_{\partial\Omega}\widehat k_\eps^2I_\eps\right),
	\end{equation}
	where $C,D$ are positive constants independent of $\eps$.
	\end{lemma}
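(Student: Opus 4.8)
The plan is to adapt, essentially line by line, the proof of Lemma~2.5 in \cite{AzevedoSantos2017}. There the estimate is established for the solution of $-\nabla\cdot(\widehat k_\eps\nabla u_\eps)=f$, whereas $u_\eps$ here solves \eqref{versao}, i.e.\ the same equation with $f$ replaced by $F_\eps$ of \eqref{feps}. Since $-1\le\theta_\eps\le0$ forces $|F_\eps|\le 2|f|+|\Delta\psi|$ pointwise, and in particular $\|F_\eps\|_{L^\infty(\Omega)}\le 2\|f\|_{L^\infty(\Omega)}+\|\Delta\psi\|_{L^\infty(\Omega)}$ uniformly in $\eps$, the obstacle penalization enters only through the constant $C$ and through the term $\int_\Omega|\Delta\psi|^p$ on the right-hand side of \eqref{secondestimate}; the whole remaining structure of the argument is untouched. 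The low regularity of $F_\eps$ (merely $L^\infty$) is dealt with exactly as in \cite{AzevedoSantos2017}, by differentiating the equation in the sense of finite differences $\delta_h^j$ and letting $h\to0$, which is legitimate because $u_\eps\in W^{2,p}(\Omega)$ for every $p<\infty$.

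The heart of the matter is a Bernstein-type computation. Writing $w_\eps=|\nabla u_\eps|^2-g^2$, so that $\widehat k_\eps=k_\eps(w_\eps)$, one differentiates \eqref{versao} in $x_j$, contracts with $\partial_j u_\eps$, and uses $\tfrac12\Delta|\nabla u_\eps|^2=\partial_j(\Delta u_\eps)\,\partial_j u_\eps+|D^2u_\eps|^2$ together with $|\nabla u_\eps|^2=w_\eps+g^2$; the last identity is precisely what makes $\Delta g^2$ appear, and is the reason the sign condition $\Delta g^2\le0$ of \eqref{feg} will be useful afterwards. One then multiplies the resulting elliptic (in)equality for $w_\eps$ by a nonnegative test function built from $\widehat k_\eps$ and $k_\eps'(w_\eps)$ (roughly, $k_\eps'(w_\eps)\,\widehat k_\eps^{\,p-1}$) and integrates over $\Omega$. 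Using $k_\eps'\ge0$, the algebraic identity $s\,k_\eps'(s)=r\big(k_\eps(s)-1\big)$ valid on $\{w_\eps>0\}$ (a direct consequence of the definition \eqref{ktheta}), and $\widehat k_\eps\ge1$, the leading contribution is bounded below by $c\,\|\widehat k_\eps\|_{L^p(\Omega)}^p$ (plus a nonnegative gradient term that is simply dropped); the equation \eqref{versao} is used once more to eliminate the purely first-order terms; Young's inequality absorbs the contribution of $F_\eps$ and of the remaining lower-order terms into the leading one, producing $C\int_\Omega(|f|^p+|\Delta\psi|^p+1)$; and integrating by parts the terms carrying $\nabla g^2$ and $\Delta g^2$ yields $D\int_\Omega\widehat k_\eps^{\,p}\Delta g^2$ together with the surface integral $-D\int_{\partial\Omega}\widehat k_\eps^{\,p}\,\tfrac{\partial g^2}{\partial\bs n}$. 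After dividing by $c$ and renaming constants, this is \eqref{secondestimate} apart from the last boundary term.

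The point that requires care, and which I expect to be the main obstacle, is that last surface term. Because the test function involves $\partial_j u_\eps$, which does not vanish on $\partial\Omega$, every integration by parts performed on a second-order expression in $u_\eps$ leaves a boundary integral of a product of first and second derivatives of $u_\eps$. Straightening the boundary locally — here the $C^2$ regularity of $\partial\Omega$ in \eqref{omega} is essential — and exploiting that the tangential derivative of $u_\eps$ vanishes on $\partial\Omega$ (so $\nabla u_\eps$ is parallel to $\bs n$ there), one checks that these boundary integrals recombine into $-2D\int_{\partial\Omega}\widehat k_\eps^{2}I_\eps$, with $I_\eps$ the quantity introduced in Lemma~\ref{lemmaI}; along the way Lemma~\ref{lemma23} is invoked to keep $\widehat k_\eps$ under control on $\partial\Omega$, since it gives $|\nabla u_\eps|^2\le g^2+C\eps$ there, hence $\widehat k_\eps\le 1+C$ on $\partial\Omega$ for $\eps\le\eps_0$. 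The interior part of the argument, on the other hand, is the standard one of \cite{AzevedoSantos2017} and, as noted, is essentially unaffected by the presence of the obstacle, since $f$ is there simply replaced by the uniformly bounded $F_\eps$.
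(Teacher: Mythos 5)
Your proposal is correct and follows exactly the route the paper takes: the authors' proof consists precisely of observing that $f$ may be replaced by $F_\eps$ in the proof of Lemma~2.5 of \cite{AzevedoSantos2017}, with the pointwise bound $|F_\eps|\le 2|f|+|\Delta\psi|$ turning $C\int_\Omega|f|^p$ into $C\int_\Omega\big(|f|^p+|\Delta\psi|^p\big)$. The additional reconstruction you give of the Bernstein-type argument and the boundary terms is consistent with the cited reference but is not reproduced in the paper, which simply delegates those details to \cite{AzevedoSantos2017}.
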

\begin{proof} We easily verify we can replace $f$, in the proof of Lemma 2.5 of \cite{AzevedoSantos2017}, by $F_\eps$. This means that in the first integral of \eqref{secondestimate}, we just need to replace $C\displaystyle\int_\Omega|f|^p$ by $C\displaystyle\int_\Omega|F_\eps|^p\le C_1\int_\Omega\big(|f|^p+|\Delta\psi|^p\big)$.
\end{proof}

\begin{lemma}[see \cite{AzevedoSantos2017}, Lemma 2.6, p. 601]\label{lemma27}
	Assume \eqref{omega}, \eqref{feg} and \eqref{psi}, and let $u_\eps$ be the solution of problem \eqref{approx}. Given $2\le p<\infty$, there exists $C_p>0$ and $\eps_0>0$ such that, for all $0<\eps<\eps_0$,  we have
	\begin{equation}\label{213}
 \|\widehat k_\eps\|_{L^p(\Omega)} \leq C_p,
\end{equation}
	where $C_p$ depends only on $\Omega$, $p$, $\|f\|_{L^\infty(\Omega)}$, $\|g\|_{C^2(\overline\Omega)}$ and $\|\Delta\psi\|_{L^\infty(\Omega)}$.
\end{lemma}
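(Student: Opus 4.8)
The plan is to follow the strategy of Lemma 2.6 of \cite{AzevedoSantos2017}, feeding the boundary estimate into the integral identity \eqref{secondestimate} and using $\Delta g^2\le 0$ to absorb the worst term. First I would discard the integral $D\int_\Omega \widehat k_\eps^p\,\Delta g^2$ on the right-hand side of \eqref{secondestimate}, which is nonpositive by assumption \eqref{feg}, obtaining
\begin{equation*}
\|\widehat k_\eps \|_{L^p(\Omega)} ^p\le C\int_\Omega\big(|f|^p+|\Delta\psi|^p+1\big)
+D\left(-\int_{\partial\Omega} \widehat k_\eps^p\,\tfrac{\partial g^2}{\partial\bs n}-2\int_{\partial\Omega}\widehat k_\eps^2 I_\eps\right).
\end{equation*}
The first volume integral is already bounded independently of $\eps$ since $f\in L^\infty(\Omega)$ and $\psi\in W^{2,\infty}(\Omega)$; so everything reduces to controlling the two boundary integrals.

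The key observation is that $\widehat k_\eps=k_\eps(|\nabla u_\eps|^2-g^2)$ restricted to $\partial\Omega$ can be estimated using Lemma \ref{lemma23}: for $0<\eps\le\eps_0$ and $x\in\partial\Omega$ we have $|\nabla u_\eps(x)|^2-g^2(x)\le C\eps$, hence from the definition \eqref{ktheta} of $k_\eps$,
\begin{equation*}
\widehat k_\eps(x)=k_\eps\big(|\nabla u_\eps(x)|^2-g^2(x)\big)\le 1+\Big(\tfrac{C\eps}{\eps}\Big)^r=1+C^r,
\end{equation*}
so $\widehat k_\eps$ is bounded on $\partial\Omega$ by a constant independent of $\eps$. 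Combining this with Lemma \ref{lemmaI}, which gives $\|I_\eps\|_{L^\infty(\partial\Omega)}\le C$ independently of $\eps$, and with the fact that $\partial g^2/\partial\bs n$ is a fixed bounded function (as $g\in C^2(\overline\Omega)$ and $\partial\Omega$ is $C^2$), both boundary integrals are bounded by a constant $C_p$ depending only on $\Omega$, $p$, $\|f\|_{L^\infty(\Omega)}$, $\|g\|_{C^2(\overline\Omega)}$ and $\|\Delta\psi\|_{L^\infty(\Omega)}$. Taking $p$-th roots yields \eqref{213}.

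The main obstacle, and the reason this is not entirely trivial, is that $\widehat k_\eps$ is bounded on $\partial\Omega$ only \emph{after} the estimate $|\nabla u_\eps|^2\le g^2+C\eps$ of Lemma \ref{lemma23} is available, and that estimate itself rests on a barrier construction at each boundary point together with the $W^{2,p}$-regularity of $u_\eps$ and the Sobolev embedding $W^{2,r}(\Omega)\hookrightarrow C^1(\overline\Omega)$ (valid since $r>d/2$), so that $\nabla u_\eps$ has well-defined boundary values; one must check that replacing $f$ by $F_\eps$ throughout — which is legitimate because $\|F_\eps\|_{L^\infty(\Omega)}\le 2\|f\|_{L^\infty(\Omega)}+\|\Delta\psi\|_{L^\infty(\Omega)}$ is $\eps$-independent — does not disturb any of these arguments. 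Once the boundary control is in place the rest is bookkeeping: all constants produced are explicitly of the claimed dependence, and no constant blows up as $\eps\to 0$.
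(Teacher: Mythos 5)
Your proposal is correct and follows essentially the same route as the paper's proof: drop the nonpositive term $D\int_\Omega\widehat k_\eps^p\Delta g^2$ using $\Delta g^2\le0$, bound $\widehat k_\eps$ on $\partial\Omega$ by $1+C^r$ via Lemma \ref{lemma23}, and control the two boundary integrals with this together with Lemma \ref{lemmaI} and the boundedness of $\partial g^2/\partial\bs n$. No substantive differences to report.
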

\begin{proof} 
We only need to look at the last three terms of \eqref{secondestimate}.

We have 
$$\int_\Omega \widehat k_\eps^p\Delta g^2\le0$$
because $\widehat k_\eps\ge0$ and $\Delta g^2\le0$, by assumption \eqref{feg}.
By Lemma \ref{lemma23},  for any $x\in\partial\Omega$, we know that $k_\eps(|\nabla u_\eps(x)|^2-g^2(x))\le 1+C^r$. Thus
$$-\int_{\partial\Omega} \widehat k_\eps^p\tfrac{\partial g^2}{\partial\bs n}\le (1+C^r)^p\int_{\partial\Omega}\left|\tfrac{\partial g^2}{\partial\bs n}\right|$$
is independent of $\eps$ and so is
$$-2\int_{\partial\Omega}\widehat k_\eps^2I_\eps\le 2(1+C^r)^2\|I_\eps\|_{L^\infty(\partial\Omega)}|\partial\Omega|,$$
by Lemma \ref{lemmaI}.
\end{proof}

\begin{proposition}
	Assume \eqref{omega}, \eqref{feg} and \eqref{psi}. Let $u_\eps$ be the solution of problem \eqref{approx}. Then
\begin{equation}\label{thirdestimate}
	 \|\widehat k_\eps \nabla u_\eps \|_{L^2(\Omega)} \leq C
\end{equation}
where $C$ is a positive constant independent of $\eps$. 
\end{proposition}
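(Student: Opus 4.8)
The plan is to control $\|\widehat k_\eps\nabla u_\eps\|_{L^2(\Omega)}$ by splitting $\Omega$ according to whether $|\nabla u_\eps|\le g$ or $|\nabla u_\eps|>g$. On the set where $|\nabla u_\eps|\le g$ we have $\widehat k_\eps=1$, so $\widehat k_\eps|\nabla u_\eps|=|\nabla u_\eps|\le g\le\|g\|_{L^\infty(\Omega)}$, which is clearly bounded in $L^2$ independently of $\eps$. So the whole difficulty sits on the set $\{|\nabla u_\eps|>g\}$, where $\widehat k_\eps=1+\big((|\nabla u_\eps|^2-g^2)/\eps\big)^r>1$.

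On $\{|\nabla u_\eps|>g\}$ I would write $\widehat k_\eps|\nabla u_\eps|^2=\widehat k_\eps|\nabla u_\eps|^2$ and bound $|\nabla u_\eps|^2\le|\nabla u_\eps|^2-g^2+g^2$. For the $g^2$ part, $\int_\Omega\widehat k_\eps^2 g^4$ is bounded by $\|g\|_{L^\infty}^4\|\widehat k_\eps\|_{L^2(\Omega)}^2$, which is finite and $\eps$-independent by Lemma~\ref{lemma27} (with $p=2$). For the $|\nabla u_\eps|^2-g^2$ part, note that on $\{|\nabla u_\eps|>g\}$ we have $0\le(|\nabla u_\eps|^2-g^2)^r=\eps^r(\widehat k_\eps-1)\le\eps^r\widehat k_\eps$, hence $(|\nabla u_\eps|^2-g^2)\le\eps(\widehat k_\eps)^{1/r}$. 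Therefore $\widehat k_\eps(|\nabla u_\eps|^2-g^2)\le\eps\,\widehat k_\eps^{1+1/r}$, and squaring and integrating gives $\int_\Omega \widehat k_\eps^2(|\nabla u_\eps|^2-g^2)^2\le\eps^2\int_\Omega\widehat k_\eps^{2+2/r}\le\eps^2\|\widehat k_\eps\|_{L^{2+2/r}(\Omega)}^{2+2/r}$, which, again by Lemma~\ref{lemma27} applied with exponent $p=2+2/r\ge 2$, is bounded by $\eps^2 C_p^{2+2/r}\le C$ for $\eps<1$. Combining the two pieces via $\widehat k_\eps|\nabla u_\eps|^2=\widehat k_\eps(|\nabla u_\eps|^2-g^2)+\widehat k_\eps g^2$ and the elementary inequality $(a+b)^2\le 2a^2+2b^2$ yields $\int_{\{|\nabla u_\eps|>g\}}\widehat k_\eps^2|\nabla u_\eps|^2\le C$.

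Putting the two regions together, $\|\widehat k_\eps\nabla u_\eps\|_{L^2(\Omega)}^2=\int_\Omega\widehat k_\eps^2|\nabla u_\eps|^2\le\|g\|_{L^2(\Omega)}^2+C$, which is the desired bound. The only real obstacle is making sure the exponent bookkeeping on $\{|\nabla u_\eps|>g\}$ closes: one needs an $L^q$ bound on $\widehat k_\eps$ with $q=2+2/r$, but since $r>2$ this $q$ lies in $[2,\infty)$, so Lemma~\ref{lemma27} applies directly and there is no circularity (we do not use \eqref{thirdestimate} to prove itself). Everything else is the Poincaré/splitting routine already used in the preceding propositions, so I would keep the write-up short and point to Lemma~\ref{lemma27} for the $L^p$ bounds on $\widehat k_\eps$.
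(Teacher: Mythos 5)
Your argument is correct in substance but takes a genuinely different route from the paper's. The paper's proof is a one-line application of H\"older's inequality with exponents $r$ and $r'$, namely $\int_\Omega\widehat k_\eps^2|\nabla u_\eps|^2\le\|\widehat k_\eps\|_{L^{2r'}(\Omega)}^2\,\|\nabla u_\eps\|_{L^{2r}(\Omega)}^2$, and then invokes the gradient estimate in \eqref{firstestimates} together with Lemma~\ref{lemma27}; it uses no pointwise structure of $k_\eps$ beyond those two \emph{a priori} bounds. You instead split $\Omega$ along $\{|\nabla u_\eps|>g\}$ and exploit the explicit penalization identity $(|\nabla u_\eps|^2-g^2)^r=\eps^r(\widehat k_\eps-1)$ there, which lets you dispense with the $\|\nabla u_\eps\|_{L^{2r}(\Omega)}$ estimate entirely and even shows that the contribution of $\{|\nabla u_\eps|>g\}$ carries a factor of $\eps$ --- slightly more information, at the cost of more bookkeeping. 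One repair is needed in that bookkeeping: after writing $\widehat k_\eps|\nabla u_\eps|^2=\widehat k_\eps(|\nabla u_\eps|^2-g^2)+\widehat k_\eps g^2$ and applying $(a+b)^2\le2a^2+2b^2$, the quantity you actually control is $\int\widehat k_\eps^2|\nabla u_\eps|^4$, not the target $\int\widehat k_\eps^2|\nabla u_\eps|^2$. Either decompose the target directly as $\widehat k_\eps^2|\nabla u_\eps|^2=\widehat k_\eps^2(|\nabla u_\eps|^2-g^2)+\widehat k_\eps^2g^2$ without squaring (the first term is then $\le\eps\,\widehat k_\eps^{\,2+1/r}$, handled by Lemma~\ref{lemma27} with $p=2+1/r$), or observe that on $\{|\nabla u_\eps|>g\}$ one has $|\nabla u_\eps|^2\le|\nabla u_\eps|^4/g_*^2$, so your stronger bound implies the desired one. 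With that fixed the argument closes, and your remark about the absence of circularity is right: $2+2/r\in[2,\infty)$ is within the scope of Lemma~\ref{lemma27}.
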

\begin{proof}
Just notice that
$$\int_\Omega \big(\widehat k_\eps \big)^2 |\nabla u_\eps|^2 \leq \left( \int_\Omega \big(\widehat k_\eps \big)^{2r'} \right)^\frac{1}{r'} \left( \int_\Omega |\nabla u_\eps|^{2r} \right)^\frac{1}{r}= \|\widehat k_\eps\|_{L^{2r'}(\Omega)}^2\|\nabla u_\eps\|_{\bs L^{2r}(\Omega)}^2,$$
and the right hand side is independent of $\eps$ by \eqref{firstestimates} and Lemma \ref{lemma27}.
\end{proof}

\section{The Lagrange multiplier--characteristic function problem}

In this section, we prove that problem \eqref{lmcf} has a solution $(\lambda,u)$ for each fixed $p\in[2,\infty)$ and that $u$  is the unique solution of the variational inequality \eqref{vi}.
Although the Lagrange multiplier will depend on $p$, we will denote it only by $\lambda$ as we are working with a fixed $p$.

We start with two auxiliary results, that are not stated here in their general setting.
\begin{lemma}[\cite{Rodrigues2005}, Lemma 2] \label{rod} 
Given $u,v\in W^{1,p}(\Omega)$, $1<p<\infty$, such that $\Delta u, \Delta v\in L^{dp'}(\Omega)$, where $p'$ is the conjugate exponent of $p$, we have
\begin{equation*}
	\Delta u=\Delta v\quad\text{ a.e. in }\{x\in\Omega: u(x)=v(x)\}.
	\end{equation*}
\end{lemma}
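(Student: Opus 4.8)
The plan is to reduce the statement to a single function and then exploit the classical fact that the gradient of a Sobolev function vanishes almost everywhere on its level sets. Set $w=u-v$. Then $w\in W^{1,p}(\Omega)$, $\Delta w=\Delta u-\Delta v\in L^{dp'}(\Omega)$, and $\{x\in\Omega:u(x)=v(x)\}=\{w=0\}$, so it suffices to prove that $\Delta w=0$ a.e.\ on $\{w=0\}$.

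First I would upgrade the regularity of $w$. Since $\Delta w\in L^{dp'}_{loc}(\Omega)$ with $dp'>1$ (because $p'>1$), interior Calder\'on--Zygmund estimates give $w\in W^{2,dp'}_{loc}(\Omega)$; in particular all second-order weak derivatives $\partial_i\partial_j w$ are genuine locally integrable functions. The precise exponent $dp'$ is more than we need here: the essential point is only that the second derivatives be honest $L^1_{loc}$ functions, to which the level-set argument below applies.

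The key tool is the classical fact that for any $\phi\in W^{1,1}_{loc}(\Omega)$ one has $\nabla\phi=0$ a.e.\ on $\{\phi=0\}$ (e.g.\ Gilbarg--Trudinger, Lemma 7.7); this follows at once from $\nabla\phi^\pm=\pm\chi_{\{\pm\phi>0\}}\nabla\phi$, which forces $\chi_{\{\phi=0\}}\nabla\phi=0$. I would apply this twice. Applied to $w$ itself it yields $\nabla w=0$ a.e.\ on $\{w=0\}$, so almost every point of $\{w=0\}$ belongs to $\{\partial_i w=0\}$ for every $i$. Applied to each component $\partial_i w\in W^{1,1}_{loc}(\Omega)$ it yields $\nabla(\partial_i w)=0$, and in particular $\partial_i^2 w=0$, a.e.\ on $\{\partial_i w=0\}$. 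Combining the two, $\partial_i^2 w=0$ a.e.\ on $\{w=0\}$ for each $i$, and summing over $i$ gives $\Delta w=\sum_{i=1}^d\partial_i^2 w=0$ a.e.\ on $\{w=0\}$, as required.

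The only genuinely delicate point is the measure-theoretic bookkeeping in this chain of ``a.e.'' inclusions: I must verify that, after discarding a single null set, $\{w=0\}$ is contained in $\bigcap_i\{\partial_i w=0\}$ and that on the latter the relation $\partial_i^2 w=0$ holds a.e., so that the two applications of the level-set lemma compose without losing positive measure. Everything else is routine once $w\in W^{2,1}_{loc}(\Omega)$ has been secured.
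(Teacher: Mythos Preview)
The paper does not supply its own proof of this lemma; it is quoted from \cite{Rodrigues2005} and used as a black box. Your argument is correct and is essentially the standard proof of such a statement: reduce to $w=u-v$, invoke interior Calder\'on--Zygmund regularity to obtain $w\in W^{2,dp'}_{loc}(\Omega)$ (so that each $\partial_i w\in W^{1,1}_{loc}$), and then iterate the Stampacchia level-set lemma, first on $w$ and then on each $\partial_i w$. The measure-theoretic chaining you flag as the only delicate point is indeed routine, since finite unions of null sets are null. Your observation that the specific exponent $dp'$ is stronger than strictly necessary for this argument is also correct: any hypothesis giving $w\in W^{2,1}_{loc}$ would suffice for the iterated level-set step; the choice $dp'>d$ additionally yields $\nabla w$ continuous via Sobolev embedding, which some proofs exploit but yours does not need.
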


\begin{theorem}[\cite{Williams1981}, Theorem 1]\label{reg}
Suppose that \eqref{omega} is satisfied, $g\in C^2(\overline\Omega)$ and $f\in L^\infty(\Omega)$. Then, if $u$ is the solution of the variational inequality \eqref{gradp}, we have  $u\in W^{2,p}(\Omega)$, for any $1\le p<\infty$.
\end{theorem}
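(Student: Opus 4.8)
The plan is to establish the regularity by penalization, reducing the $W^{2,p}$ claim to a uniform second-order estimate on a family of smooth approximating solutions and then passing to the limit. I would approximate \eqref{gradp} by the obstacle-free analogue of \eqref{versao}, namely the quasilinear equations $-\nabla\cdot(\widehat k_\eps\nabla u_\eps)=f$ in $\Omega$, $u_\eps=0$ on $\partial\Omega$, with $\widehat k_\eps=k_\eps(|\nabla u_\eps|^2-g^2)$, which for each fixed $\eps$ have a unique solution $u_\eps\in W^{2,p}(\Omega)$ for all $p<\infty$. A standard energy estimate gives $u_\eps\rightharpoonup u$ in $H^1_0(\Omega)$, and the uniform gradient bound below forces $|\nabla u|\le g$ in the limit, so that $u$ is admissible and, by a routine lower-semicontinuity argument, solves \eqref{gradp}; by uniqueness it is the solution. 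It therefore suffices to bound $\|u_\eps\|_{W^{2,p}(\Omega)}$ independently of $\eps$: such a bound yields a subsequence converging weakly in $W^{2,p}(\Omega)$, whose limit must be $u$, and lower semicontinuity of the norm then gives $u\in W^{2,p}(\Omega)$ for every $p<\infty$.

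I would produce the uniform estimate in two stages. First, a uniform Lipschitz bound $\|\nabla u_\eps\|_{L^\infty(\Omega)}\le C$: near $\partial\Omega$ this is a barrier/boundary-gradient estimate of the type of Lemma \ref{lemma23}, yielding $|\nabla u_\eps|^2\le g^2+C\eps$ there, while in the interior it follows from a Bernstein argument applied to $w=|\nabla u_\eps|^2-g^2$, whose maximum is controlled through the maximum principle for the operator linearized at $u_\eps$; the hypothesis $g\in C^2(\overline\Omega)$ enters only via the bounded quantities $\nabla g^2$ and $\Delta g^2$, with no sign required. Second, writing the equation in non-divergence form, $\big(\widehat k_\eps\,\delta_{ij}+2k_\eps'\,\partial_iu_\eps\,\partial_ju_\eps\big)D_{ij}u_\eps=-f+k_\eps'\,\nabla g^2\cdot\nabla u_\eps$, whose coefficient matrix is elliptic from below (it is $\succeq\widehat k_\eps\,\mathrm{Id}\succeq\mathrm{Id}$), I would bound $\|\Delta u_\eps\|_{L^p(\Omega)}$ by a weighted integral identity: differentiate the equation, test against a suitable power of $\Delta u_\eps$ (equivalently of $\widehat k_\eps$), integrate by parts, and control the boundary contributions via the uniform gradient bound together with the bound on $I_\eps$ from Lemma \ref{lemmaI}. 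Since $\partial\Omega$ is $C^2$ and $u_\eps=0$ on it, an $L^p$ bound on $\Delta u_\eps$ upgrades to a full $W^{2,p}$ bound by Calder\'on--Zygmund estimates for the Laplacian.

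The main obstacle is exactly this uniform second-order estimate, carried out without the sign condition $\Delta g^2\le0$ that Section 2 relies on. In the integral identity the variable constraint generates a curvature term of the form $\int_\Omega\widehat k_\eps^{\,p}\,\Delta g^2$ (compare \eqref{secondestimate}), which in the proof of Lemma \ref{lemma27} is simply discarded because $\Delta g^2\le0$, but which here carries no favorable sign and cannot be absorbed by a mere Young inequality, since the resulting constant need not be small. Overcoming this is the crux: it requires a more robust argument exploiting that $\widehat k_\eps>1$ only on the shrinking set $\{|\nabla u_\eps|>g\}$, combined with the uniform Lipschitz bound and interpolation against the $L^1$ control of $\widehat k_\eps$ from \eqref{firstestimates}, so that the offending term is reabsorbed with a small constant or bounded on its own. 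Once this estimate is secured, the passage to the limit and the identification of $u$ as the unique solution of \eqref{gradp} are immediate.
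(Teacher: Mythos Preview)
The paper does not prove Theorem~\ref{reg}; it is quoted verbatim from \cite{Williams1981} and used as a black box in Step~1 of the proof of Theorem~\ref{+}. There is therefore no proof in the paper to compare your proposal against.

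As for your sketch on its own merits: the penalization strategy and the identification of the curvature term $\int_\Omega \widehat k_\eps^{\,p}\,\Delta g^2$ as the main obstruction are both on target, and you are right that Theorem~\ref{reg} does \emph{not} assume $\Delta g^2\le 0$, so the mechanism of Lemma~\ref{lemma27} is unavailable. But what you offer in its place is not an argument: you list ingredients (smallness in measure of $\{|\nabla u_\eps|>g\}$, the uniform Lipschitz bound, the $L^1$ control of $\widehat k_\eps$ from \eqref{firstestimates}) and assert that they allow the bad term to be ``reabsorbed with a small constant or bounded on its own''. They do not. The $L^1$ bound on $\widehat k_\eps$ cannot be interpolated toward $L^p$ without an $L^\infty$ bound, which is exactly what is missing; and smallness of the support of $\widehat k_\eps-1$ in measure gives no leverage against an $L^p$ norm without a pointwise bound. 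More to the point, your route targets a uniform $L^p$ bound on $\widehat k_\eps$, which is strictly stronger than $u\in W^{2,p}$: it would produce a Lagrange multiplier in $L^p(\Omega)$, something the present paper can obtain only under the extra hypothesis $\Delta g^2\le 0$. Williams' argument does not pass through the multiplier at all; it bounds $D^2u_\eps$ directly, and this decoupling of the second-order estimate from control of $\widehat k_\eps$ is precisely the idea your proposal is missing.
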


We also need the following auxiliary lemma.

\begin{lemma}\label{k_monotone}
The operator $\Phi_\eps:H^1_0(\Omega)\rightarrow H^{-1}(\Omega)$, defined as
$$\langle \Phi_\eps(u),v\rangle=\int_\Omega k_\eps(|\nabla u|^2 -g^2)\nabla u\cdot\nabla v,$$
is monotone.
\end{lemma}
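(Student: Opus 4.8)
The plan is to reduce the assertion to a pointwise monotonicity property of the vector field $\bs a(x,\xi)=k_\eps(|\xi|^2-g(x)^2)\,\xi$, $\xi\in\R^d$. For $u,w\in H^1_0(\Omega)$ one has
$$\langle\Phi_\eps(u)-\Phi_\eps(w),u-w\rangle=\int_\Omega\big(\bs a(x,\nabla u)-\bs a(x,\nabla w)\big)\cdot\nabla(u-w),$$
so it is enough to prove that, for a.e.\ $x\in\Omega$ and all $\xi,\eta\in\R^d$,
$$\big(\bs a(x,\xi)-\bs a(x,\eta)\big)\cdot(\xi-\eta)\ge0,$$
and then integrate with $\xi=\nabla u(x)$, $\eta=\nabla w(x)$.

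To establish this pointwise inequality, fix $x$, put $c=g(x)^2\ge0$ and $\rho(s)=k_\eps(s^2-c)$ for $s\ge0$, so that $\bs a(x,\xi)=\rho(|\xi|)\,\xi$. Expanding the scalar product,
$$\big(\bs a(x,\xi)-\bs a(x,\eta)\big)\cdot(\xi-\eta)=\rho(|\xi|)|\xi|^2+\rho(|\eta|)|\eta|^2-\big(\rho(|\xi|)+\rho(|\eta|)\big)\,\xi\cdot\eta.$$
Since $k_\eps\ge1$, the factor $\rho(|\xi|)+\rho(|\eta|)$ is strictly positive, so the Cauchy--Schwarz inequality $\xi\cdot\eta\le|\xi|\,|\eta|$ may be used in the direction that decreases the right-hand side, giving
$$\big(\bs a(x,\xi)-\bs a(x,\eta)\big)\cdot(\xi-\eta)\ge\big(|\xi|-|\eta|\big)\big(\rho(|\xi|)|\xi|-\rho(|\eta|)|\eta|\big).$$

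It then remains to check that $s\mapsto s\,\rho(s)=s\,k_\eps(s^2-c)$ is nondecreasing on $[0,\infty)$: the map $s\mapsto s^2-c$ is nondecreasing there and $k_\eps$ is increasing by \eqref{ktheta}, so $s\mapsto k_\eps(s^2-c)$ is nonnegative and nondecreasing, and multiplying by the nonnegative nondecreasing function $s\mapsto s$ preserves monotonicity. Hence the right-hand side of the last display is a product of two factors of the same sign, thus nonnegative, which proves the pointwise monotonicity of $\bs a(x,\cdot)$ and, after integration, the monotonicity of $\Phi_\eps$. There is essentially no obstacle in this argument; the only delicate point is that the Cauchy--Schwarz step is applied in the unfavourable-looking direction, which is legitimate precisely because $k_\eps$ is bounded below by $1$ (the well-definedness of $\Phi_\eps$ on the relevant space being as in \cite{AzevedoMirandaSantos2013}).
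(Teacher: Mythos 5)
Your proof is correct and follows essentially the same route as the paper: both reduce to the pointwise inequality $\big(\rho(|\xi|)\xi-\rho(|\eta|)\eta\big)\cdot(\xi-\eta)\ge\big(\rho(|\xi|)|\xi|-\rho(|\eta|)|\eta|\big)\big(|\xi|-|\eta|\big)$ via Cauchy--Schwarz and then use the monotonicity of $s\mapsto s\,k_\eps(s^2-c)$ on $[0,\infty)$. The only cosmetic difference is that the paper checks this last monotonicity by noting $\varphi_\eps'\ge0$, whereas you observe that a product of nonnegative nondecreasing functions is nondecreasing.
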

\begin{proof}
It is enough to notice that, using the Cauchy-Schwarz inequality, 
\begin{multline*}
\big(k_\eps(|\nabla u|^2 -g^2)\nabla u-k_\eps(|\nabla u|^2 -g^2)\nabla v\big)\cdot\nabla(u-v)\\
\ge\big(k_\eps(|\nabla u|^2 -g^2)|\nabla u|-k_\eps(|\nabla u|^2 -g^2)|\nabla v|\big)(|\nabla u|-|\nabla v|)\ge 0,
\end{multline*}
since the real function $\varphi_\eps(s)=k_\eps(s^2-a)s$ satisfies $\varphi_\eps'\ge0$.
\end{proof}

Now we prove the main theorem of this paper.

\begin{proof} {\em of Theorem \ref{+}}	
	 Note that we only need to prove this result for $p$ large enough since, if $(\lambda_p,u)$ solves the problem \eqref{lmcf} for a given $p$, it also solves it for any $q<p$. 

We look to the approximating problem  \eqref{approx} written as in \eqref{versao}.
By the {\em a priori} estimates \eqref{firstestimates}, \eqref{213}, \eqref{thirdestimate} and as $-1 \leq \theta_\eps(u_\eps-\psi) \leq 0$, there exist  functions $u$, $\lambda$, $\bs \Lambda$, $\chi_{_*}$ and subsequences, with the same set of indexes, such that
 for any $2r\le p<\infty$,
\begin{align*}
&	\nabla u_\eps \underset{\eps \rightarrow 0}{\lraup} \nabla u \text{ in } \boldsymbol L^{2r}(\Omega)\text{-weak},\qquad&\theta_\eps(u_\eps-\psi)\underset{\eps\rightarrow0}{\lraup}\chi_{_*}\text{ in }L^p(\Omega)\text{-weak},\\
&\widehat k_\eps \underset{\eps \rightarrow 0}{\lraup} \lambda \text{ in } L^p(\Omega)\text{-weak},\qquad&\widehat k_\eps \nabla u_\eps \underset{\eps \rightarrow 0}{\lraup} \boldsymbol \Lambda \text{ in } \boldsymbol L^2(\Omega)\text{-weak}.
\end{align*}
Recalling that $d<2r\le p$, by the compact Sobolev inclusion of $W^{1,p}_0(\Omega)$ into $C^{0,1-d/p}(\overline\Omega)$, we have $u_\eps\underset{\eps\rightarrow 0}\longrightarrow u$ in $C^{0,1-d/p}(\overline\Omega)$.

We split the proof into several steps.

\vspace{2mm}

{\bf Step 1:} $u\in W^{2,p}(\Omega)$

We first prove that $u\in\K_{g}$. To verify that $|\nabla u|\le g$, recall the definition of $A_\eps$ in \eqref{aeps} and that $k_\eps$ is an increasing function. In this set, we have $\widehat k_\eps=k_\eps(|\nabla u_\eps|^2-g^2)\ge k_\eps(\sqrt\eps)=1+\frac1{\eps^{\frac{r}2}}$ and we observe that
$$\frac1{\eps^{\frac{r}2}}|A_\eps|=\int_{A_\eps}\frac1{\eps^{\frac{r}2}}\le\int_{A_\eps}\widehat k_\eps\le\int_\Omega \widehat k_\eps\le C$$
and then
\begin{align*}
	\int_\Omega \big(|\nabla u|^2-g^2\big)^+&\le\varliminf_{\eps\rightarrow0}\int_\Omega\big(|\nabla u_\eps|^2-g^2\big)^+\\
	&=\varliminf_{\eps\rightarrow0}\Big(\int_{\Omega\setminus A_\eps}\big(|\nabla u_\eps^2|-g^2\big)^+ +\int_{A_\eps}\big(|\nabla u_\eps|^2-g^2\big)^+\Big)\\
	&\le \varliminf_{\eps\rightarrow0}\Big(\sqrt{\eps}|\Omega|+\Big(\|(|\nabla u_\eps|^2-g^2)^+\|_{L^2(\Omega)}|A_\eps|^\frac12\Big)\\
	&\le \varliminf_{\eps\rightarrow0}\Big(\sqrt{\eps}|\Omega|+\big(\|\nabla u_\eps\|^2_{\bs L^4(\Omega)}+\|g\|_{L^4(\Omega)}^2\big)\,C^\frac12\eps^\frac{r}4\Big)=0.
\end{align*}
So, $|\nabla u|\le g$ a.e..

Now we verify that $u$ satisfies the variational inequality \eqref{gradp} with $f$ replaced by $F=f-(-\Delta\psi-f)^+\chi_{_*}$. To do so, multiply the equation \eqref{approx} by $w-u_\eps$, with $w\in\K_g^\nabla$.
As $k_\eps(|\nabla w|^2-g^2)\equiv1$, we have
\begin{equation*}
	\int_\Omega\big(\widehat k_\eps\nabla u_\eps-\nabla w\big)\cdot\nabla(w-u_\eps)+\int_\Omega\nabla w\cdot\nabla(w-u_\eps)=\int_\Omega F_\eps(w-u_\eps),
\end{equation*}
where $F_\eps$ is defined in \eqref{feps}. Using Lemma \ref{k_monotone}, we obtain
$$\int_\Omega\nabla w\cdot\nabla(w-u_\eps)\ge\int_\Omega F_\eps(w-u_\eps).$$
Since $\nabla u_\eps\underset{\eps\rightarrow0}\lraup\nabla u$ in $L^2(\Omega)$-weak, $u_\eps\underset{\eps\rightarrow0}\longrightarrow u$ in $C(\overline\Omega)$ and $F_\eps\underset{\eps\rightarrow0}\lraup F$ in $L^2(\Omega)$-weak, we get
$$\int_\Omega\nabla w\cdot\nabla(w-u)\ge\int_\Omega F(w-u),\quad\forall w\in\K_g^\nabla.$$

Using the argument in Minty's Lemma, taking, for any $w\in\K_g^\nabla$ the test function $v=u+\mu(w-u)\in\K_g^\nabla$, where $\mu\in(0,1]$, we obtain
$$\mu\int_\Omega\nabla(u+\mu(w-u))\cdot\nabla(w-u)\ge\mu\int_\Omega F(w-u).$$
Dividing both members by $\mu$ and letting $\mu\rightarrow0^+$, we prove that $u$ solves the variational inequality \eqref{gradp} with $f$ replaced by $F$.  Finally, as $F\in L^\infty(\Omega)$, the regularity of $u$ is obtained applying Theorem \ref{reg}.

\vspace{2mm}

{\bf Step 2:} $u$ is the unique solution of the variational inequality \eqref{vi}

We already proved in the previous step that $|\nabla u|\le g$. As $u_\eps\ge\psi-\eps$ by \eqref{upsi}, we get $u\ge\psi$, and so $u\in\K_{g,\psi}^\nabla$. Now we multiply the equation \eqref{approx} by $v-u_\eps$, where $v\in\K_{g,\psi}^\nabla$. Note that $\theta_\eps(v-\psi)\equiv0$ and 
\begin{equation*}
	\int_\Omega(-\Delta\psi-f)^+\theta_\eps(u_\eps-\psi)(v-u_\eps)=	\int_\Omega(-\Delta\psi-f)^+\big(\theta_\eps(u_\eps-\psi)-\theta_\eps(v-\psi)\big)\big((v-\psi)-(u_\eps-\psi)\big)\le0
\end{equation*}
because the function $\theta_\eps$ is increasing. Using this and Lemma \ref{k_monotone}, we get
\begin{align*}
\int_\Omega f(v-u_\eps)&=\int_\Omega k_\eps(|\nabla u_\eps|^2-g^2)\nabla u_\eps
+\int_\Omega(-\Delta\psi-f)^+\theta_\eps(u_\eps-\psi)(v-u_\eps)\\
&\le\int_\Omega\nabla v\cdot\nabla(v-u_\eps) ,\quad\forall v\in\K_{g,\psi}^\nabla.
\end{align*}
Letting first $\eps\rightarrow0$ and using again the argument as in Minty's Lemma, the conclusion follows.
\vspace{2mm}

{\bf Step 3:} $(-\Delta\psi-f)^+\chi_{_*}=-(-\Delta\psi-f)^+\chi_{\{u=\psi\}}$

Observe first that $\theta_\eps(u_\eps-\psi)(u_\eps-\psi)^+\equiv0$ and so $\chi_{_*}(u-\psi)^+\equiv0$, concluding that $\chi_{_*}=0$ in $\{u>\psi\}$.

As $u\in W^{2,p}(\Omega)\subset C^{1,1-d/p}(\overline\Omega)$ if $p>d$, then $|\nabla u|$ is continuous and the set $\mathcal U=\{|\nabla u|<g\}$ is open. So, given $\varphi\in \mathcal D(\mathcal U)$, for $\delta$ sufficiently small, we have $v=u\pm\delta\varphi\in\K_g^\nabla$. Thus, we can use it as test function in \eqref{gradp} with $f$ replaced by $F$, obtaining, after integration by parts,
$$\pm\delta\int_\Omega(-\Delta u-F)\varphi\ge0,\quad\forall\varphi\in\mathcal D(\mathcal U).$$
Therefore
$$-\Delta u-f=-(-\Delta\psi-f)^+\chi_{_*}\quad\text{ in }\mathcal U$$
and, in particular, as $-1\leq \chi_{_*} \leq 0$, then $-(-\Delta\psi-f)^+\chi_{_*}\ge 0$ and also $-\Delta u-f\ge0$.

Observe now that, as $\nabla u=\nabla\psi$ in $\{u=\psi\}$ and, by assumption \eqref{psi}, $|\nabla\psi|<g$, then $\{u=\psi\}\subset\mathcal U$. Using Lemma \ref{rod}, we know that $\Delta u=\Delta\psi$ in $\{u=\psi\}$. So, in the set $\{u=\psi\}$, we have
$$-(-\Delta \psi-f)^+\chi_{_*}=-\Delta u-f=(-\Delta u-f)^+=(-\Delta\psi-f)^+=(-\Delta\psi-f)^+\chi_{\{u=\psi\}}$$
and the conclusion follows.

\vspace{2mm}

{\bf Step 4:} Problem \eqref{lmcf} has a solution

From \eqref{versao}, we get
\begin{equation}\label{uepsvar}
	\int_\Omega\widehat k_\eps\nabla u_\eps\cdot\nabla\varphi=\int_\Omega F_\eps\varphi,\qquad\forall \varphi\in H^1_0(\Omega)
	\end{equation}
and, passing to the limit when $\eps\rightarrow0$, , we obtain
\begin{equation}\label{LimLambda}
	\int_\Omega\bs\Lambda\cdot\nabla\varphi=\int_\Omega F\varphi,\qquad\forall\varphi\in H^1_0(\Omega).
\end{equation}

Now we will prove that 
\begin{equation}\label{lambdaLambda}
	\int_\Omega\lambda|\nabla u|^2=\int_\Omega\bs\Lambda\cdot\nabla u,
	\end{equation}
 following the steps in \cite{AzevedoSantos2017}, which are presented here for the sake of completeness. Observe that $\lambda\in L^p(\Omega)$ but $\nabla u\in L^\infty(\Omega)$, as $|\nabla u|\le g^*$. Note first that, as $\widehat k_\eps\ge1$, then $\lambda\ge 1$.
Using $u_\eps$ as test function in \eqref{uepsvar}, letting $\eps\rightarrow0$,  we get
\begin{equation}\label{kepsLambda}
\lim_{\eps\rightarrow0}	\int_\Omega\widehat k_\eps|\nabla u_\eps|^2=\lim_{\eps\rightarrow0}\int_\Omega F_\eps u_\eps=\int_\Omega Fu=\int_\Omega\bs\Lambda\cdot\nabla u,
\end{equation}
since $u_\eps$ converges strongly to $u$ in $L^2(\Omega)$ and applying \eqref{LimLambda}.
 
Observing that 	$(\widehat k_\eps-1)(|\nabla u_\eps|^2-g^2)\ge0$,  then
\begin{equation}\label{maior}
\widehat k_\eps|\nabla u_\eps|^2\ge \widehat k_\eps g^2+\big(|\nabla u_\eps|^2-g^2\big).
	\end{equation}
Besides, as $\lambda\ge1$ and $|\nabla u|\le g$, then
\begin{equation}\label{outra}
(\lambda-1)(g^2-|\nabla u|^2)\ge0.
\end{equation} 
So, using \eqref{kepsLambda} and \eqref{maior},
 \begin{align}\label{estxx'}
\nonumber\int_\Omega\bs\Lambda\cdot\nabla u&=\lim_{\eps\rightarrow0}\int_\Omega\widehat k_\eps|\nabla u_\eps|^2\ge\int_\Omega\lambda g^2+\varliminf_{\eps\rightarrow0}\int_\Omega\big(|\nabla u_\eps|^2-g^2\big)\\
&\ge\int_\Omega\lambda g^2+\int_\Omega\big(|\nabla u|^2-g^2\big)=\int_\Omega\big(\lambda-1\big)\big(g^2-|\nabla u|^2\big)+\int_\Omega\lambda|\nabla u|^2\\
\nonumber&\ge\int_\Omega\lambda|\nabla u|^2,
\end{align}
with the last inequality being verified by using \eqref{outra}.

We also have
\begin{align*}
0\le&\lim_{\eps\rightarrow0}\int_\Omega\widehat k_\eps|\nabla (u_\eps-u)|^2=\lim_{\eps\rightarrow0}\left(\int_\Omega\widehat k_\eps|\nabla u_\eps|^2-2\int_\Omega\widehat k_\eps\nabla u_\eps\cdot\nabla u+\int_\Omega\widehat k_\eps|\nabla u|^2\right)\\
	=&\int_\Omega\bs\Lambda\cdot\nabla u-2\int_\Omega\bs\Lambda\cdot\nabla u+\int_\Omega\lambda|\nabla u|^2\\
	=&-\int_\Omega\bs\Lambda\cdot\nabla u+\int_\Omega\lambda|\nabla u|^2,
\end{align*}
concluding, together with \eqref{estxx'}, that \eqref{lambdaLambda} holds. In particular,  as $\widehat k_\eps\ge1$,
\begin{equation}\label{strong}
\lim_{\eps\rightarrow0}	\int_\Omega|\nabla(u_\eps-u)|^2\le \lim_{\eps\rightarrow0}\int_\Omega\widehat k_\eps|\nabla( u_\eps-u)|^2=0,
	\end{equation}
proving the strong convergence of $u_\eps$ to $u$ in $H^1_0(\Omega)$.
Observe that, for any $\bs\xi\in\bs{\mathcal D}(\Omega)$,
$$\left|\int_\Omega\big(\bs\Lambda-\lambda\nabla u)\cdot\bs\xi\right|=\lim_{\eps\rightarrow0}\left|\int_\Omega\widehat k_\eps\nabla (u_\eps-u)\cdot\bs\xi\right| \le\lim_{\eps\rightarrow0} \Big(\int_{\Omega}\widehat k_\eps|\nabla(u_\eps-u)|^2\Big)^\frac12\Big(\int_\Omega\widehat k_\eps\Big)^\frac12\|\bs\xi\|_{\bs L^\infty(\Omega)}=0$$
and, by density,
$\bs\Lambda=\lambda\nabla u$ in $\bs L^2(\Omega).$ So, using \eqref{LimLambda},
\begin{equation*}
	\int_\Omega\lambda\nabla u\cdot\nabla\varphi=\int_\Omega F\varphi,\qquad \forall\varphi\in H^1_0(\Omega).
\end{equation*}

Using the inequality \eqref{maior}, we get $(\widehat k_\eps-1)(|\nabla u_\eps|-g)\ge0$, and, from the strong convergence of $u_\eps$ to $u$ in $H^1_0 (\Omega)$ proved in \eqref{strong}, we conclude that $(\lambda-1)(|\nabla u|-g)\geq 0$. Together with \eqref{outra}, this implies that  $(\lambda-1)(|\nabla u|-g)= 0$.

\end{proof}

\begin{remark}\label{lmcf_vi}
We saw in the last proof that the solution of the variational inequality \eqref{vi} can be obtained as the limit of a subsequence of solutions of the approximating problems \eqref{approx}. We will now see that, if $(\lambda,u)$ is any solution of the Lagrange multiplier-characteristic function \eqref{lmcf}, then $u$ solves the variational inequality \eqref{vi}. In fact, as
$(\lambda-1)(|\nabla u|-g)=0$, for any $v\in\K_{g,\psi}^\nabla$, we have
$$(\lambda-1)\nabla u\cdot\nabla(v-u)\le(\lambda-1)(|\nabla u|\,|\nabla v|-|\nabla u|^2)=(\lambda-1)|\nabla u|(g-|\nabla u|)=0$$
and
$$-(-\Delta\psi-f)^+\chi_{\{u=\psi\}}(v-u)=-(-\Delta\psi-f)^+\chi_{\{u=\psi\}}(v-\psi)\le0.$$
So, multiplying the equation \eqref{lmcf} by $v-u$ and integrating, we get
$$\int_\Omega\lambda\nabla u\cdot\nabla(v-u)-\int_\Omega(-\Delta\psi-f)^+\chi_{\{u=\psi\}}(v-u)=\int_\Omega f(v-u)$$
and, using the two inequalities above, we obtain that
$$\int_\Omega\nabla u\cdot\nabla(v-u)\ge\int_\Omega f(v-u),$$
where $v$ is any function in $\K_{g,\psi}^\nabla$ and $u\in\K_{g,\psi}^\nabla$, as $u$ solves \eqref{lmcf}.
\end{remark}

\begin{remark} $ $
	\begin{enumerate}
		\item A relevant question is what happens if we weaken the assumption $|\nabla\psi|< g$ by assuming only $|\nabla\psi|\le g$. In this situation, the sets $\{u=\psi\}$ and $\{|\nabla u|=g\}$ may intersect and the identification of $(-\Delta\psi-f)^+\chi_{_*}$ with $-(-\Delta\psi-f)^+\chi_{\{u=\psi\}}$ becomes more difficult or may not even be possible.
	\item  We could also think about what happens if there is no restriction on $|\nabla\psi|$, which implies that the contact of $u$ and $\psi$ can only happen in the region where $|\nabla\psi|\le g$.
	\end{enumerate}
	\end{remark}
	
	\begin{remark}
		We observe that the solution of the variational inequality with gradient constraint is also solution of a variational inequality with obstacle and gradient constraint.
		
		Let $u$ be the unique solution of the variational inequality \eqref{gradp}. Then, setting $g^*=\|g\|_{L^\infty(\Omega)}$, we have
		$$\forall x,y\in\overline\Omega\quad|u(x)-u(y)|\le g^*|x-y|.$$
		Then, as $u=0$ on $\partial\Omega$, we conclude that $-g^*\text{diam}(\Omega)\le u(x)\le g^*\text{diam}(\Omega)$, where $\text{diam}(\Omega)$ is the diameter of the set $\Omega$. Therefore, as $u\in \K_{g,-g^*\text{diam}(\Omega)}\subseteq \K_g^\nabla$, then $u$ is the solution of the variational inequality with gradient constraint $g$ and obstacle $-g^*\text{diam}(\Omega)$.
	\end{remark}
	
	\begin{remark}
		The solution of variational inequality with obstacle is also solution of a variational inequality with obstacle and gradient constraint.
		
		Let $u$ be the solution of the variational inequality \eqref{obsp}. Considering the approximating problem
		$$-\Delta u_\eps+(-\Delta\psi-f)^+\theta_\eps(u_\eps-\psi)=f\quad\text{ in }\Omega,\qquad u_\eps=0\quad\text{ on }\partial\Omega,$$
		where $\theta_\eps$ is defined in \eqref{ktheta}, set $G_\eps=f-(-\Delta\psi-f)^+\theta_\eps(u_\eps-\psi)$ and note that $u_\eps$ solves the problem $-\Delta u_\eps=G_\eps$ in $\Omega$, $u_\eps=0$ on $\partial\Omega$. As
		 $$\|G_\eps\|_{L^\infty(\Omega)}\le 2\|f\|_{L^\infty(\Omega)}+\|\Delta\psi\|_{L^\infty(\Omega)},$$
		 then, for any $1\le p<\infty$, $\|u_\eps\|_{W^{2,p}(\Omega)}\le C$ and
		then $u_\eps\underset n\lraup u$ in $W^{2,p}(\Omega)$-weak. So
		$$\|u\|_{W^{2,p}(\Omega)}\le\varliminf_{\eps\rightarrow0}\|u_\eps\|_{W^{2,p}(\Omega)}\le C.$$
	 Thus, as $u\in C^{1,1-d/p}(\overline\Omega)$ if $p>d$, there exists $g^*\in\R^+$ such that $|\nabla u|\le g^*$. Then, as $u\in\K_{g^*,\psi}\subseteq \K_\psi$, $u$ solves the variational inequality with gradient constraint $g^*$ and obstacle $\psi$.
	\end{remark}

\section{Continuous dependence}

In this section, we study the behaviour of certain sequences of solutions $(\lambda_n,u_n)_n$ of problem \eqref{lmcf}, when the given data converges, when $n\rightarrow\infty$, in suitable spaces. 

\begin{theorem}\label{dependence} Suppose that \eqref{omega} is satisfied and, for each $n\in\N$, $(f_n,g_n,\psi_n)$ satisfies the assumptions \eqref{feg}  and \eqref{psi}. Suppose, in addition, that there exists $(f,g,\psi) \in L^\infty(\Omega) \times C^2(\overline\Omega) \times W^{2,\infty}(\Omega)$ such that, for a subsequence,
	\begin{equation}\label{limn}
			f_n\underset n\longrightarrow f\quad\text{ in }\ L^{\infty}(\Omega),\quad g_n\underset n\longrightarrow g\quad\text{ in }\ C^2(\overline\Omega), \quad \psi_n\underset n\longrightarrow\psi \quad\text{ in }\ W^{2,\infty}(\Omega),\quad |\nabla\psi|<g.
	\end{equation}
Then, fixing $p\in (d,\infty)$, there exists $(\lambda_n,u_n)$  solution of problem \eqref{lmcf} with data $(f_n,g_n,\psi_n)$ and there exists $(\lambda,u)$, solution of problem \eqref{lmcf} with data $(f,g,\psi)$, such that
	\begin{equation*}
	\lambda_n\underset n\lraup \lambda\quad\text{ in }L^p(\Omega)\text{-weak},\quad		u_n\underset n\longrightarrow u\ \text{ in } C^{1,1-d/p}(\overline\Omega).
	\end{equation*}
\end{theorem}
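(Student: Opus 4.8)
The plan is to construct $(\lambda_n,u_n)$ for each $n$ exactly as in the proof of Theorem \ref{+} — as a limit, along a subsequence $\eps\to0$, of the solutions $u_{n,\eps}$ of the approximating problems \eqref{approx} with data $(f_n,g_n,\psi_n)$ — and then to let $n\to\infty$. The starting observation is that, by \eqref{limn}, the norms $\|f_n\|_{L^\infty(\Omega)}$, $\|\Delta\psi_n\|_{L^\infty(\Omega)}$ and $\|g_n\|_{C^2(\overline\Omega)}$ are bounded uniformly in $n$, while $\min g_n\ge g_*/2>0$ for $n$ large, and $\Delta g_n^2\le0$, $|\nabla\psi_n|<g_n$ hold for every $n$. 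Hence the \emph{a priori} estimates of Section~2 (in particular \eqref{firstestimates}, \eqref{213}, \eqref{thirdestimate}) and the estimates obtained in the proof of Theorem \ref{+} hold with constants independent of both $\eps$ and $n$; in particular $\|\lambda_n\|_{L^p(\Omega)}\le C_p$ and $\|\nabla u_n\|_{\bs L^\infty(\Omega)}\le\|g_n\|_{L^\infty(\Omega)}\le C$. Testing the first equation of \eqref{lmcf} with $w-u_n$, $w\in\K_{g_n}^\nabla$, and using $(\lambda_n-1)(|\nabla u_n|-g_n)=0$ (as in Remark \ref{lmcf_vi}), one sees that $u_n$ solves the gradient constraint variational inequality \eqref{gradp} with constraint $g_n$ and right-hand side $\widetilde F_n=f_n+(-\Delta\psi_n-f_n)^+\chi_{\{u_n=\psi_n\}}$, with $\|\widetilde F_n\|_{L^\infty(\Omega)}\le2\|f_n\|_{L^\infty(\Omega)}+\|\Delta\psi_n\|_{L^\infty(\Omega)}\le C$. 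Theorem \ref{reg}, in its quantitative form (with constant depending only on $\Omega$, $q$, $\|g_n\|_{C^2(\overline\Omega)}$ and $\|\widetilde F_n\|_{L^\infty(\Omega)}$), then yields $\|u_n\|_{W^{2,q}(\Omega)}\le C_q$, uniform in $n$, for every $1\le q<\infty$.

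Fix now $q\in(p,\infty)$ (so $q>d$) and extract subsequences with $\lambda_n\lraup\lambda$ in $L^p(\Omega)$-weak, $u_n\lraup u$ in $W^{2,q}(\Omega)$-weak, and $\chi_{\{u_n=\psi_n\}}$ converging weakly-$*$ in $L^\infty(\Omega)$ to some $\chi$ with $0\le\chi\le1$; note $\lambda\ge1$. Since $q>p$, $W^{2,q}(\Omega)$ embeds compactly into $C^{1,1-d/p}(\overline\Omega)$, so $u_n\to u$ in $C^{1,1-d/p}(\overline\Omega)$ and, in particular, $\nabla u_n\to\nabla u$ uniformly, hence strongly in every $\bs L^s(\Omega)$. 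Passing to the limit in $|\nabla u_n|\le g_n$, $u_n\ge\psi_n$, $u_n=0$ on $\partial\Omega$ gives $|\nabla u|\le g$, $u\ge\psi$, $u=0$ on $\partial\Omega$; and since $(-\Delta\psi_n-f_n)^+\to(-\Delta\psi-f)^+$ in $L^\infty(\Omega)$, we get $\widetilde F_n\lraup\widetilde F:=f+(-\Delta\psi-f)^+\chi$ in $L^p(\Omega)$-weak. Using $(1-\eta_n)v\in\K_{g_n}^\nabla$, with $\eta_n\to0$ suitably chosen, as a recovery sequence for each $v\in\K_g^\nabla$, together with the strong $H^1_0$ convergence of $u_n$, one passes to the limit in the variational inequality satisfied by $u_n$ and obtains that $u$ solves \eqref{gradp} with constraint $g$ and right-hand side $\widetilde F$; by Theorem \ref{reg}, $u\in W^{2,q}(\Omega)$, consistently with the above.

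The identification of the limits follows \textbf{Step 3} of the proof of Theorem \ref{+}, now with $u$ and $\chi$. A compact exhaustion of the open set $\{u>\psi\}$, together with $u_n-\psi_n\to u-\psi$ uniformly, gives $\chi=0$ a.e. on $\{u>\psi\}$. Testing \eqref{gradp} (with data $g,\widetilde F$) with $u\pm\delta\varphi$, $\varphi\in\mathcal D(\{|\nabla u|<g\})$, gives $-\Delta u=\widetilde F$ a.e. on $\{|\nabla u|<g\}$. Since $\nabla u=\nabla\psi$ a.e. on $\{u=\psi\}$ and $|\nabla\psi|<g$, up to a null set $\{u=\psi\}\subset\{|\nabla u|<g\}$, and Lemma \ref{rod} gives $\Delta u=\Delta\psi$ a.e. on $\{u=\psi\}$; comparing the two expressions for $-\Delta u$ on $\{u=\psi\}$ yields $-\Delta\psi-f=(-\Delta\psi-f)^+\chi$ there, whence $(-\Delta\psi-f)^+\chi=(-\Delta\psi-f)^+$ a.e. on $\{u=\psi\}$. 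Therefore $(-\Delta\psi-f)^+\chi=(-\Delta\psi-f)^+\chi_{\{u=\psi\}}$ a.e. in $\Omega$, i.e. $\widetilde F=f+(-\Delta\psi-f)^+\chi_{\{u=\psi\}}$. For the equation, the product of the weak convergence $\lambda_n\lraup\lambda$ in $L^p(\Omega)$ and the strong convergence $\nabla u_n\to\nabla u$ in $\bs L^\infty(\Omega)$ gives $\lambda_n\nabla u_n\lraup\lambda\nabla u$ in $\bs L^p(\Omega)$-weak, so from $-\nabla\cdot(\lambda_n\nabla u_n)=\widetilde F_n$ we obtain $-\nabla\cdot(\lambda\nabla u)=\widetilde F=f+(-\Delta\psi-f)^+\chi_{\{u=\psi\}}$. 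Finally, $\lambda_n-1\lraup\lambda-1$ in $L^p(\Omega)$-weak and $|\nabla u_n|-g_n\to|\nabla u|-g$ uniformly, so
\begin{equation*}
\int_\Omega(\lambda-1)(|\nabla u|-g)=\lim_{n\to\infty}\int_\Omega(\lambda_n-1)(|\nabla u_n|-g_n)=0;
\end{equation*}
as $\lambda\ge1$ and $|\nabla u|\le g$ force the integrand to be $\le0$ a.e., we conclude $(\lambda-1)(|\nabla u|-g)=0$ a.e. Hence $(\lambda,u)$ solves \eqref{lmcf} with data $(f,g,\psi)$; since the $u$-component of any solution of \eqref{lmcf} solves the variational inequality \eqref{vi} (Remark \ref{lmcf_vi}), which has a unique solution, $u$ is uniquely determined and $u_n\to u$ in fact for the whole sequence.

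I expect the two substantive points to be: (i) the \emph{uniform in $n$} $W^{2,q}$ bound for $u_n$, which relies on the quantitative form of the regularity result of Theorem \ref{reg}; and (ii) the identification of the weak-$*$ limit $\chi$ of $\chi_{\{u_n=\psi_n\}}$ with $\chi_{\{u=\psi\}}$ (modulo the factor $(-\Delta\psi-f)^+$), which is the place where the hypothesis $|\nabla\psi|<g$ and Lemma \ref{rod} are really used. The remaining steps are a routine weak/strong compactness passage to the limit, handling the varying constraint $g_n$ and obstacle $\psi_n$ via their strong convergence; notably, the complementarity relation $(\lambda-1)(|\nabla u|-g)=0$ — the delicate point in the proof of Theorem \ref{+} — is essentially immediate here because $\nabla u_n\to\nabla u$ strongly.
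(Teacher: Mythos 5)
Your proof follows essentially the same route as the paper's: uniform-in-$n$ bounds for $\|u_n\|_{W^{2,\cdot}(\Omega)}$ and $\|\lambda_n\|_{L^p(\Omega)}$ coming from the quantitative regularity of the gradient-constraint variational inequality and from \eqref{213}, weak compactness for $\lambda_n$ and $\chi_{\{u_n=\psi_n\}}$, identification of the weak limit of the characteristic functions by repeating Step 3 of the proof of Theorem \ref{+} (openness of $\{|\nabla u|<g\}$, Lemma \ref{rod} on $\{u=\psi\}$, and the hypothesis $|\nabla\psi|<g$), and the complementarity relation obtained from the weak convergence of $\lambda_n$ paired with the uniform convergence of $\nabla u_n$. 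The only deviations are small refinements that, if anything, tighten steps the paper leaves implicit: taking $q>p$ so that the embedding into $C^{1,1-d/p}(\overline\Omega)$ is genuinely compact, and the explicit recovery sequence $(1-\eta_n)v$ for passing to the limit with the varying constraint $g_n$.
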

\begin{proof} Let $(\lambda_n,u_n)$ be a solution of problem \eqref{lmcf} obtained as a limit of a subsequence of $(\widehat k_{\eps,n},u_{\eps,n})_\eps$, where $u_{\eps,n}$ solves the approximating problem \eqref{approx} with data $(f_n,g_n,\psi_n)$ and $\widehat k_{\eps,n}=k_\eps(|\nabla u_{\eps,n}|^2-g_n^2)$. 
	
	In the proof of Theorem \ref{+}, which was done in Section 3, we saw that $u_n$ solves the variational inequality \eqref{gradp} with data $(f_n+(-\Delta\psi_n-f_n)^+\chi_{_{\{u_n=\psi_n\}}},g_n)$ and $u_n\in W^{2,p}(\Omega)$ with $\|u_n\|_{W^{2,p}(\Omega)}$ depending only on $\|f_n\|_{L^p(\Omega)}$, $\|g_n\|_{C^2(\overline\Omega)}$ and $\|\Delta\psi_n\|_{L^p(\Omega)}$.  By the assumption \eqref{limn}, we obtain that $\|u_n\|_{W^{2,p}(\Omega)}$ is bounded independently of $n$ and thus, for a subsequence, we have the convergences
	$$u_n\underset n\lraup u\ \text{ in }W^{2,p}(\Omega)\text{-weak},\quad u_n\underset n\longrightarrow u\ \text{ in }C^{1,1-d/p}(\overline\Omega).$$

We know that, by \eqref{213}, $\|\widehat k_{\eps,n}\|_{L^p(\Omega)}\le C_{n}$, where $C_{n}$ depends only  on  $\|f_n\|_{L^\infty(\Omega)} $, $\|g_n\|_{C^2(\overline\Omega)}$ and $\|\Delta\psi_n\|_{L^\infty(\Omega)} $. By the assumption \eqref{limn}, $C_{n}$ is bounded from above by a constant $C$, independent of $n$. For a subsequence, $\widehat k_{\eps,n}\underset{\eps\rightarrow 0}\lraup\lambda_n$ in $L^{p}(\Omega)$-weak and therefore
\begin{equation*}
	\|\lambda_n\|_{L^p(\Omega)}\le\varliminf_{\eps\rightarrow0}\|\widehat k_{\eps,n}\|_{L^p(\Omega)}\le C.
	\end{equation*}
Then, there exists a subsequence of $(\lambda_n)_n$ such that
\begin{equation*}\lambda_n\underset n\lraup \lambda \quad\text{ in } L^p(\Omega)\text{-weak}.
	\end{equation*}
	
Since $\|\chi_{\{u_n=\psi_n\}}\|_{L^p(\Omega)}\le|\Omega|^{\frac1p}$, there exists $\chi_{_*}\in L^p(\Omega)$ and a subsequence such that
$$\chi_{ _{\{u_n=\psi_n\} }}\underset n\lraup\chi_{_*}\quad\text{ in }L^p(\Omega)\text{-weak}.$$ As $(\lambda_n,u_n)$ solves problem \eqref{lmcf}, for any $\varphi\in H^1_0(\Omega)$ we have
$$\int_\Omega\lambda_n\nabla u_n\cdot\nabla\varphi-\int_\Omega(-\Delta\psi_n-f_n)^+\chi_{ _{\{u_n=\psi_n\} }}\varphi=\int_\Omega f_n\varphi.$$
So, letting $n\rightarrow\infty$, as $\nabla u_n\underset n\longrightarrow\nabla u$ in $C(\overline\Omega)$, we get
\begin{equation*}
\int_\Omega\lambda\nabla u\cdot\nabla\varphi-\int_\Omega(-\Delta\psi-f)^+\chi_{_*}\varphi=\int_\Omega f\varphi.
\end{equation*}
Since $|\nabla u_n|\le g_n$ and $u_n\ge\psi_n$, then $|\nabla u|\le g$ and $u\ge\psi$, i.e. $u\in\K_{g,\psi}^\nabla$. As $(\lambda_n-1)(|\nabla u_n|-g_n)=0$, the weak convergence of $\lambda_n$ to $\lambda$ in $L^p(\Omega)$ and the strong convergence of $\nabla u_n$ to $\nabla u$ in $C(\overline\Omega)$ are sufficient to conclude that $(\lambda-1)(|\nabla u|-g)=0$.

In order to prove that $(\lambda,u)$ solves problem \eqref{lmcf}, it remains to verify that 
\begin{equation*}
	(-\Delta\psi-f)^+\chi_{_*}=(-\Delta\psi-f)^+\chi_{ _{\{u=\psi\} }}.
	\end{equation*}
 Although the details are slightly different, we follow the ideas in the proof of Theorem\ref{+}. Notice first that, as $\chi_{ _{\{u_n=\psi_n\} }}(u_n-\psi_n)\equiv0$, then $\chi_{_*}(u-\psi)\equiv0$, which implies that $\chi_{_*}=0$ in $\{u>\psi\}$. 

We know that $u$ solves the variational inequality \eqref{gradp} with data $(f+(-\Delta\psi-f)^+,g)$. 
Arguing as in Step 3 of the proof of Theorem \ref{+}, as $u\in C^1(\overline\Omega)$, the set $\mathcal U=\{|\nabla u|<g\}$ is open. Taking  $\varphi\in\mathcal D(\mathcal U)$, there exists $\delta>0$ such that $u\pm\delta\varphi\in\K_g^\nabla$ and therefore, using these as test functions in this variational inequality, we obtain, after integrating by parts,
$$\pm\delta\int_\Omega(-\Delta u- f-(-\Delta\psi-f)^+\chi_{_*}\varphi\ge0,$$
and so
\begin{equation}\label{finally}-\Delta u-f=(-\Delta\psi-f)^+\chi_{_*}\quad\text{ in }\mathcal U.
	\end{equation}
In particular, as $(-\Delta\psi-f)^+\chi_{_*}\ge 0$ then also $-\Delta u-f\ge0$.

By Lemma \ref{rod}, as $\Delta u,\Delta\psi\in L^p(\Omega)$ we have $\Delta u=\Delta\psi$ in $\{u=\psi\}$ and, finally, using \eqref{finally}, in $\{u=\psi\}$,
$$(-\Delta \psi-f)^+\chi_{_*}=-\Delta u-f=(-\Delta u-f)^+=(-\Delta\psi-f)^+=(-\Delta\psi-f)^+\chi_{_{\{u=\psi\}}},$$
concluding that $(\lambda,u)$ solves \eqref{lmcf}. As $(\lambda,u)$ solves problem \eqref{lmcf}, by applying Remark \ref{lmcf_vi}, then $u$ solves the variational inequality \eqref{vi}.
\end{proof}

\begin{remark}
	We could have proved that the family of convex sets $\K_{g_n,\psi_n}$ converges, in the sense of Mosco, to $\K_{g,\psi}^\nabla$ and used a well known result of Mosco (see \cite{Mosco1969} or \cite[Theorem 4.1, p. 99]{Rodrigues1987}) to deduce the convergence of the sequence of solutions $(u_n)_n$ of the variational inequality \eqref{vi}, with data $(f_n,g_n,\psi_n)_n$ to the solution $u$ of the variational inequality \eqref{vi}, with a sequence of data $(f,g,\psi)$. However, this convergence would only be proved in $H^1_0(\Omega)$, the space where the convex sets are contained. The last theorem shows that, with our assumptions, $u_n$ converges to $u$ in $C^{1,1-d/p}(\overline\Omega)$.
\end{remark}

	\section*{Acknowledgements}
	
\noindent The authors  were partially financed by Portuguese Funds
through FCT (Funda\c c\~ao para a Ci\^encia e a Tecnologia) within the Projects UIDB/00013/2020 and UIDP/00013/2020.

\end{document}